\theoremstyle{plain}
\newtheorem{theorem}{Theorem}
\newtheorem{lemma}[theorem]{Lemma}
\newtheorem{corollary}[theorem]{Corollary}
\newtheorem{observation}[theorem]{Observation}
\theoremstyle{definition}
\newtheorem{conjecture}[theorem]{Conjecture}
\newtheorem{question}[theorem]{Question}
\theoremstyle{remark}
\newtheorem{remark}[theorem]{Remark}
\newenvironment{subproof}[1][Proof]%
{\begin{proof}[#1]}{\end{proof}}
\newcommand{\order}[1]{\ensuremath{\left\lvert#1\right\rvert}}
\newcommand{\mob}{M\"{o}bius }
\newcommand{\mobfn}[2]{\mu[#1,#2]}
\newcommand{\mobp}[1]{\mu[#1]} 
\newcommand{\ex}[1]{\overline{#1}}
\newcommand{\oneplus}{1 \oplus} 
\newcommand{\plusone}{\oplus 1}
\newcommand{\oneminus}{1 \ominus} 
\newcommand{\minusone}{\ominus 1}
\newcommand{\nsums}[2]{\oplus^{#1}#2}
\newcommand{\sumra}{\nsums{r}{\alpha}}
\newcommand{\chain}[1]{\mathcal{#1}}
\newcommand{\chainc}{\chain{C}}
\newcommand{\chainr}{\chain{R}}
\newcommand{\chaing}{\chain{G}}
\newcommand{\chainb}{\chain{B}}
\newcommand{\cprime}{c^\prime}
\newcommand{\redset}{\mathsf{R_{\pi}}}
\DeclareMathOperator{\mobmax}{max_{\mu}}
\newcommand{\balloon}{\circledcirc}
\newcommand{\ballgen}[3]{#2 \balloon_{#1} #3}
\newcommand{\ball}[2]{#1 \balloon #2}
\newcommand{\gridc}[3]{
	\foreach \i in {0,1,...,#1}{
		\draw [color=#3] ({\i+0.5}, 0.5)--({\i+0.5}, {#2+0.5});
	};
	\foreach \i in {0,1,...,#2}{
		\draw [color=#3] (0.5, {\i+0.5})--({#1+0.5}, {\i+0.5});
	};
}
\newcommand{\grid}[2]{
	\gridc{#1}{#2}{darkgray};
}
\newcommand{\sgrid}[1]{\grid{#1}{#1}}
\newcommand{\spoint}[2]{
    \filldraw (#1,#2) circle (3pt);
}    
\newcommand{\point}[2]{
	\filldraw (#1,#2) circle (4pt);
}
\newcommand{\opoint}[2]{
    \filldraw (#1,#2) circle (4pt);
    \filldraw [white] (#1,#2) circle (3pt);
}
\newcommand{\cell}[3]{
	\node at (#1,#2) {#3};
}
\newcommand{\scell}[3]{
    \cell{#1}{#2}{\small{#3}}
}
\title{2413-balloon permutations and the growth of the \mob function}
\author{David Marchant\\
    \small School of Mathematics and Statistics\\[-0.8ex]
    \small The Open University\\[-0.8ex] 
    \small Milton Keynes, MK7 6AA, UK\\
    \small\tt david.marchant@open.ac.uk}
\begin{document}
\maketitle

\begin{abstract}  
    We show that the growth of the 
    principal \mob 
    function on the permutation poset
    is at least exponential.  
    This improves on previous work, which 
    has shown that the growth 
    is at least polynomial.
       
    We define a method of constructing 
    a permutation from a smaller permutation
    which we call
    ``ballooning''.  
    We show that if $\beta$ is a 2413-balloon,
    and $\pi$ is the 2413-balloon of $\beta$,
    then
    $\mobfn{1}{\pi} = 2 \mobfn{1}{\beta}$.
    This allows us to construct a sequence
    of permutations $\pi_1, \pi_2, \pi_3\ldots$
    with lengths $n, n+4, n+8, \ldots$     
    such that $\mobfn{1}{\pi_{i+1}} = 2 \mobfn{1}{\pi_{i}}$,
    and this gives us exponential growth.
    Further, our construction method 
    gives permutations that lie within
    a hereditary class with
    finitely many simple permutations.
    
    We also find an expression for the value
    of $\mobfn{1}{\pi}$, where $\pi$ is a 2413-balloon,
    with no restriction
    on the permutation
    being ballooned.
\end{abstract}

\section{Introduction}

%
%
Let $\sigma$ and $\pi$ be permutations of natural numbers,
written in one-line notation, 
with 
$\sigma = \sigma_1 \sigma_2 \ldots \sigma_m$,
and
$\pi = \pi_1 \pi_2 \ldots \pi_n$.
We say that $\sigma$ is \emph{contained} in $\pi$ 
if there is a sequence  
$1 \leq i_1 < i_2 < \ \ldots < i_m \leq n$
such that for any $r,s \in \{1,\ldots,m\}$,
$\pi_{i_r} < \pi_{i_s}$ if and only if $\sigma_r < \sigma_s$.  
We say that $\pi$ \emph{avoids} $\sigma$ if $\pi$ does not contain $\sigma$.
The set of all permutations is a poset under the partial order given by
containment.  

%
%
A closed interval $[\sigma, \pi]$ in a poset is the sub-poset
$\{ \tau : \sigma \leq \tau \leq \pi \}$,
and a half-open interval $[\sigma, \pi)$ is the sub-poset
$\{ \tau : \sigma \leq \tau < \pi \}$.
The \mob function $\mobfn{\sigma}{\pi}$ 
is defined on an interval of a poset as follows:
for $\sigma \nleq \pi$, $\mobfn{\sigma}{\pi} = 0$;
for all $\pi$, $\mobfn{\pi}{\pi} = 1$;
and for $\sigma < \pi$, 
\[
\mobfn{\sigma}{\pi} = - \sum_{\lambda \in [\sigma, \pi)} \mobfn{\sigma}{\lambda}
\]

%
%
In this paper we are principally concerned
with the growth of the
\emph{principal \mob function}, 
$\mobp{\pi} = \mobfn{1}{\pi}$.

%
%
Applying the \mob function to 
the permutation poset
was first mentioned by
Wilf~\cite{Wilf2002}.  
Burstein, Jel{\'{i}}nek, Jel{\'{i}}nkov{\'{a}} 
and Steingr{\'{i}}msson~\cite{Burstein2011}
ask whether the principal \mob function
is unbounded, which is the first reference 
to the growth of the \mob function in the literature.
They show that $\mobp{\pi} \in \{0, \pm 1 \}$, 
and thus is bounded, 
if $\pi$ is a separable permutation,
and so is in a hereditary class with simples
$\{ 1, 12, 21\}$.
They ask (Question 27) for which classes 
is $\mobp{\pi}$ bounded?

Smith~\cite{Smith2013}
found an explicit formula for the principal \mob 
function for all permutations with a single descent.
This shows that
the growth of the \mob function
is at least quadratic.
Jel{\'{i}}nek, Kantor, Kyn{\v{c}}l and Tancer~\cite{Jelinek2020}
show how to construct a sequence of permutations
where the absolute value of the \mob function
grows according to the seventh power of the length.
In the other direction,
Brignall, Jel{\'{i}}nek, Kyn{\v{c}}l and Marchant~\cite{Brignall2020}
show that the proportion of permutations of length $n$ with principal \mob 
function equal to zero is asymptotically bounded below by 
$(1-1/e)^2 \ge 0.3995$.
%

%
%
We show that, given some permutation $\beta$,
we can construct a permutation 
that we call the ``2413-balloon'' of $\beta$.
This permutation will have four more points than $\beta$.
We then show that if 
$\pi$ is a 2413-balloon of $\beta$,
and $\beta$ is itself a 2413-balloon, 
then 
$\mobp{\pi} = 2 \mobp{\beta}$.
From this we deduce that
the growth of the principal \mob function is exponential.
If $\beta = 25314$ (which is a 2413-balloon),
then we can construct a hereditary class
that contains only the simple permutations
$\{ 1,12,21,2413,25314\}$, 
where the growth of the 
principal \mob function is exponential,
answering questions in
Burstein et al~\cite{Burstein2011}
and 
Jel{\'{i}}nek et al~\cite{Jelinek2020}.

%
%
We start by recalling some essential definitions and notation in
Section~\ref{section-definitions-and-notation},
where we also provide some extensions of 
existing results.
We formally define a 2413-balloon in 
Section~\ref{section-define-2413-balloon},
and we provide some
results which will be used
in the remainder of this paper.
In Section~\ref{section-2413-double-balloons},
we derive an expression for the 
value of $\mobp{\pi}$ when $\pi$ is a double 2413-balloon,
and following this
we show that the growth of the \mob function
is exponential in
Section~\ref{section-growth-rate-of-mu}.
We return to the topic of 2413-balloons in
Section~\ref{section-2413-balloons}, 
and derive an expression for the 
value of $\mobp{\pi}$ when $\pi$ is
any 2413-balloon.
Finally, we discuss the generalization
of the balloon operator in
Section~\ref{section-concluding-remarks}.
We also ask some questions regarding
the growth of the \mob function.

\section{Essential definitions, notation, and results}
\label{section-definitions-and-notation}

In this section we recall some standard definitions
and notation that we will use, and add some 
simple definitions and consequences of known results.  

%
%
An \emph{interval} in a permutation $\pi$ is a 
contiguous set of indexes $i, i+1, \ldots, j$
such that the set of values
$\{ \pi_i, \pi_{i+1}, \ldots, \pi_j \}$ is also contiguous.
Every permutation $\pi$ has intervals of 
length 1 and of length $\order{\pi}$,
which we call \emph{trivial intervals}.
A \emph{simple} permutation 
is a permutation that only has 
trivial intervals.
As examples, $1324$ is not simple, as, for example, 
the second and third points $(32)$
form a non-trivial interval,
whereas $2413$ is simple.

%
%
Given two permutations $\alpha$ and $\beta$,
with lengths $a$ and $b$ respectively,
the \emph{direct sum} of $\alpha$ and $\beta$,
written $\alpha \oplus \beta$ is the permutation
$
\alpha_1, \ldots, \alpha_{a},
\beta_1 + a, \ldots, \beta_{b} + a
$.
The \emph{skew sum}, $\alpha \ominus \beta$,
is the permutation
$
\alpha_1 + b, \ldots, \alpha_{a} + b,
\beta_1, \ldots, \beta_{b}
$.

%
%
Let $\alpha$ be a permutation, 
and $r$ a positive integer.
Then $\sumra$
is $\alpha \oplus \alpha \oplus \ldots \oplus \alpha \oplus \alpha$,
with $r$ occurrences of $\alpha$.

%
%
If $\pi$ is a permutation with length $n$,
then
the number of \emph{corners} of $\pi$
is the number of points of $\pi$ that are extremal in both
position and value,
that is, $\pi_1 \in \{1, n\}$ or $\pi_n \in \{1, n\}$.
It is easy to see that any permutation 
with length 2 or more can have at most two corners.
We adopt the convention that the permutation
$1$ has one corner.

%
%
If a permutation $\pi$ can be written as 
$\oneplus\oneplus\tau$,
$\oneminus\oneminus\tau$,
$\tau\plusone\plusone$, or
$\tau\minusone\minusone$,
where $\tau$ is non-empty 
(so $\order{\pi} \geq 3$),
then we say that $\pi$
has a \emph{long corner}.

%
%
We now have
\begin{lemma}
    \label{lemma-oneplus-oneplus}
    If $\pi$ has a long corner,
    then
    $\mobp{\pi} = 0$.	
\end{lemma}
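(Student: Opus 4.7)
The plan is to induct on $|\pi|$. The four variants of long corner are related by reverse and complement, both of which are automorphisms of the permutation poset and hence preserve $\mobp{\cdot}$; so it suffices to handle the case $\pi = 1 \oplus 1 \oplus \tau$. The base case is $|\tau| = 1$, in which $\pi = 123$ and $[1, 123] = \{1, 12, 123\}$, giving $\mobp{123} = -(\mobp{1} + \mobp{12}) = 0$ directly.

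The heart of the inductive step, and the step I expect to be the main obstacle, is the following structural claim: every $\sigma$ with $\sigma \leq \pi$ but $\sigma \not\leq 1 \oplus \tau$ has the form $\sigma = 1 \oplus 1 \oplus \rho$ with $\rho$ non-empty. To establish this I would fix any embedding of $\sigma$ into $\pi$ and perform a short case analysis on which of the two leading positions of $\pi$ it uses. If either of them is omitted, the embedding descends to one into $1 \oplus \tau$, contradicting $\sigma \not\leq 1 \oplus \tau$. Hence both positions must be used; as they carry the values $1$ and $2$, they supply the two smallest values of $\sigma$ at its two leftmost positions, forcing $\sigma_1 = 1$, $\sigma_2 = 2$, and hence $\sigma = 1 \oplus 1 \oplus \rho$. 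If $\rho$ were empty then $\sigma = 12$, which lies in $1 \oplus \tau$ (since $\tau$ is non-empty), contradicting the hypothesis; so $\rho$ is non-empty and $\sigma$ itself has a long corner.

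With the structural claim in hand, I would apply the defining relation $\sum_{\sigma \in [1, \pi]} \mobp{\sigma} = 0$. The claim partitions $[1, \pi]$ into $A = [1, 1 \oplus \tau]$ and $B = [1, \pi] \setminus A$. The sum over $A$ vanishes by the Möbius identity on $[1, 1 \oplus \tau]$, whose top element exceeds $1$ because $\tau$ is non-empty. Every element of $B$ other than $\pi$ is a strictly shorter permutation with a long corner and so contributes $0$ by the inductive hypothesis, leaving only $\mobp{\pi} = 0$.
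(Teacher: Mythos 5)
Your proof is correct. The paper itself does not supply an argument for this lemma: it labels it a ``well-known consequence'' of Propositions~1 and~2 of Burstein, Jel\'{i}nek, Jel\'{i}nkov\'{a}, and Steingr\'{i}msson, and refers the reader to Lemma~4 of Brignall and Marchant for a proof. That cited proof proceeds through the recursive decomposition formulas for $\mobfn{\sigma}{\pi}$ when $\pi$ is a direct sum, whereas you give a self-contained argument straight from the defining identity $\sum_{\sigma \in [1,\pi]} \mobp{\sigma} = 0$. Your route rests on a clean structural observation: for $\pi = 1 \oplus 1 \oplus \tau$, any $\sigma \leq \pi$ with $\sigma \not\leq 1 \oplus \tau$ must use both of the first two positions in every embedding, hence has $\sigma_1 = 1$, $\sigma_2 = 2$, and so is itself of the form $1 \oplus 1 \oplus \rho$ with $\rho$ non-empty. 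Splitting $[1,\pi]$ into $[1, 1\oplus\tau]$ (whose M\"{o}bius sum vanishes because $1 \oplus \tau > 1$) and the remainder (all but $\pi$ of which vanish by the inductive hypothesis) then isolates $\mobp{\pi}$ and forces it to zero. What your approach buys is independence from the Burstein et al.\ machinery and a shorter chain of dependencies; what the decomposition-formula route buys is that it comes packaged with the tools needed for Lemma~\ref{lemma-oneplus} and other corollaries at the same time, so it is the natural choice in a paper already leaning on those propositions. One minor stylistic point: you could fold the base case into the inductive step, since when $\tau$ is a single point the set $B \setminus \{\pi\}$ is empty and the argument already closes; but keeping the explicit base case does no harm.
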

\begin{lemma}
    \label{lemma-oneplus}
    If $\pi$ can be written as 
    $\pi = \oneplus\tau$,
    or
    $\pi = \tau\plusone$
    or 
    $\pi = \oneminus\tau$
    or
    $\pi = \tau\minusone$,
    and does not have a long corner,
    then
    $\mobp{\pi} = - \mobp{\tau}$.
\end{lemma}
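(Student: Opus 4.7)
The plan is to prove the case $\pi = \oneplus\tau$ by strong induction on $|\pi|$, and then deduce the remaining three forms from the reverse and complement symmetries of the permutation poset, each of which preserves $\mobp{\cdot}$ and permutes the four ``corner-attach'' operations into one another. The base case is $\pi = 12$ (so $\tau = 1$), where $\mobp{12} = -1 = -\mobp{1}$ by direct calculation.

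For the inductive step I would use the standard Möbius identity $\sum_{\lambda \in [1,\pi]} \mobp{\lambda} = 0$ (valid since $\pi \ne 1$) and evaluate the left-hand side by pairing terms via a map $\iota$ on $[1,\pi]$: set $\iota(\sigma) = \oneplus\sigma$ whenever $\sigma_1 \ne 1$, and $\iota(\oneplus\sigma') = \sigma'$ whenever $\sigma'$ is non-empty, with the ad~hoc pair $\iota(1) = 12$ for the singleton. Whenever $\sigma_1 \ne 1$, no embedding of $\sigma$ in $\pi$ can use the leading $1$, so $\sigma \le \tau$ and hence $\oneplus\sigma \le \pi$, showing that $\iota$ really lands in $[1,\pi]$. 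The elements on which $\iota^2 \ne \mathrm{id}$ are exactly those of the form $\oneplus\oneplus\sigma''$, i.e.\ those with a long corner at the start; Lemma~\ref{lemma-oneplus-oneplus} kills their $\mobp{\cdot}$, so these ``orphans'' contribute nothing to the sum.

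The hypothesis that $\pi$ has no long corner forces $\tau_1 \ne 1$, so $\tau$ itself satisfies $\tau_1 \ne 1$ and its $\iota$-partner is precisely $\oneplus\tau = \pi$. For every other pair $\{\sigma, \oneplus\sigma\}$ we have $|\oneplus\sigma| < |\pi|$, so I can apply the inductive hypothesis: if $\oneplus\sigma$ has no long corner, then $\mobp{\oneplus\sigma} = -\mobp{\sigma}$ directly and the pair cancels; if $\oneplus\sigma$ does have a long corner, then $\sigma_1 \ne 1$ precludes one at the start and the fact that $(\oneplus\sigma)_1 = 1$ precludes the $\minusone\minusone$ variant at the end, so the long corner must sit at the end as a $\plusone\plusone$ --- and one checks that $\sigma$ must carry the same structure, so both $\mobp{\sigma}$ and $\mobp{\oneplus\sigma}$ vanish by Lemma~\ref{lemma-oneplus-oneplus}. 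Either way the pair sums to zero, and the Möbius identity collapses to the single surviving pair $(\tau,\pi)$, giving $\mobp{\pi} + \mobp{\tau} = 0$.

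The main obstacle is the long-corner bookkeeping --- verifying that whenever $\oneplus\sigma$ acquires a long corner, $\sigma$ inherits one too so that Lemma~\ref{lemma-oneplus-oneplus} handles the pair. Once this is in place, everything else follows by the involution counting.
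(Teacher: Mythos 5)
Your argument is essentially correct, but note that the paper does not prove this lemma at all: it is dispatched by citation, as a consequence of Propositions~1 and~2 (and Corollary~3) of Burstein, Jel\'{\i}nek, Jel\'{\i}nkov\'{a} and Steingr\'{\i}msson, whose direct-sum decomposition formulas immediately give $\mobp{\oneplus\tau}=-\mobp{\tau}$ once $\tau$ is not itself of the form $\oneplus\tau'$. What you supply instead is a self-contained, first-principles proof: you run the recurrence $\sum_{\lambda\in[1,\pi]}\mobp{\lambda}=0$ and cancel terms via the pairing $\sigma\leftrightarrow\oneplus\sigma$, using Lemma~\ref{lemma-oneplus-oneplus} to annihilate everything that the pairing fails to match. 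I checked the delicate points and they hold: if $\sigma\le\pi$ with $\sigma_1\ne 1$ then indeed $\oneplus\sigma\le\pi$; the no-long-corner hypothesis forces $\tau_1\ne 1$, so $\{\tau,\pi\}$ is the unique surviving pair; and when $\oneplus\sigma$ acquires a long corner it can only be of type $\plusone\plusone$ at the end, which (since $\sigma_1\ne1$ rules out $|\sigma|=2$) forces $\sigma$ to end in $\plusone\plusone$ with a nonempty prefix as well, so both M\"obius values vanish. Two cosmetic remarks. First, your $\iota$ is not literally an involution (e.g.\ $\iota(123)=12$ but $\iota(12)=1$); the clean statement is that $[1,\pi]$ partitions into the pairs $\{\sigma,\oneplus\sigma\}$ with $\sigma_1\ne1$, the ad~hoc pair $\{1,12\}$, and orphans $\oneplus\oneplus\sigma''$ with $\sigma''$ nonempty, each class contributing zero except $\{\tau,\pi\}$. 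Second, "exactly those of the form $\oneplus\oneplus\sigma''$" should exclude $12$ (where $\sigma''$ is empty and there is no long corner), but your special pair $\{1,12\}$ already absorbs it. The symmetry reduction to the other three forms is routine and valid. Your route is longer than the citation but has the merit of being elementary and of exercising exactly the cancellation style used throughout the rest of the paper.
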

These are well-known consequences of
Propositions~1~and~2 of
Burstein, 
Jel{\'{i}}nek, 
Jel{\'{i}}nkov{\'{a}} and 
Steingr{\'{i}}msson~\cite{Burstein2011},
and we refrain from
providing proofs here.
The reader is directed to    
Lemma 4 in~\cite{Brignall2017a}
for a proof of Lemma~\ref{lemma-oneplus-oneplus}.
Lemma~\ref{lemma-oneplus}
is a trivial extension of Corollary 3 in~\cite{Burstein2011}.  

%
%
A triple adjacency is a monotonic interval of length 3.
Smith shows that
\begin{lemma}[{%
    Smith~\cite[Lemma 1]{Smith2013}}]
    \label{lemma-triple-adjacencies}
    If a permutation $\pi$
    contains a triple adjacency then $\mobp{\pi} = 0$.
\end{lemma}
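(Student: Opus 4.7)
The plan is induction on $|\pi|$, with the base case $|\pi|=3$ forcing $\pi \in \{123, 321\}$; both of these have a long corner, so Lemma~\ref{lemma-oneplus-oneplus} applies immediately. For the inductive step, I would fix a triple adjacency at positions $p, p+1, p+2$ and split into cases based on the corner structure of $\pi$.

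If $\pi$ has a long corner then Lemma~\ref{lemma-oneplus-oneplus} gives $\mobp{\pi} = 0$ directly. If $\pi$ has a corner but no long corner, Lemma~\ref{lemma-oneplus} yields $\mobp{\pi} = -\mobp{\tau}$ with $|\tau| = |\pi|-1$; in most subcases the triple adjacency survives intact in $\tau$, and the inductive hypothesis kills it. In the boundary subcases where the peeled corner sits immediately adjacent to the triple adjacency, I would argue that either $\tau$ inherits a long corner (so that Lemma~\ref{lemma-oneplus-oneplus} applies to $\tau$ directly) or that $\pi$ itself already had a long corner, contradicting the assumption of the current case. This part should be a finite check across the four orientations of the triple adjacency against the four possible corner types.

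The substantive remaining case is when $\pi$ has no corner at all, so neither Lemma~\ref{lemma-oneplus-oneplus} nor Lemma~\ref{lemma-oneplus} applies. My plan here is to exploit the three-fold redundancy of the triple adjacency: deleting any one of $\pi_p, \pi_{p+1}, \pi_{p+2}$ yields the same smaller permutation $\pi'$. Using Philip Hall's formulation $\mobp{\pi} = \sum_k (-1)^k c_k$, where $c_k$ counts strict chains of length $k$ from $1$ to $\pi$, I would define a sign-reversing involution on these chains by toggling the presence of $\pi'$ as the penultimate element: if the chain already passes through $\pi'$, delete that step; if it does not, insert $\pi'$ just below $\pi$ (which is always possible since $\pi' < \pi$). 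This changes the chain length by one and so flips the sign, producing the desired cancellation. The main obstacle will be making the involution canonical and fixed-point-free when $\pi$ contains multiple triple adjacencies or longer monotone runs, where several choices of $\pi'$ exist; pinning down a canonical choice (for instance, the leftmost triple adjacency, with a tiebreaker between the increasing and decreasing orientations) and then verifying that the resulting map is genuinely self-inverse is where I expect the bulk of the technical work to sit.
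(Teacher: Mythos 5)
The paper does not actually prove this lemma---it is quoted from Smith~\cite{Smith2013}---so I can only assess your argument on its own terms. Your base case and your treatment of the cases where $\pi$ has a corner are sound: if the triple adjacency touches the corner being peeled, $\pi$ is forced to have a long corner (contradicting the case assumption), and otherwise the adjacency survives intact into $\tau$, so Lemma~\ref{lemma-oneplus} plus the inductive hypothesis finishes those cases.

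The no-corner case, however, contains a fatal gap: the insertion half of your involution is not well-defined. For $c \cup \{\pi'\}$ to be a chain, every element of $c$ must be comparable to $\pi'$; the fact that $\pi' < \pi$ only guarantees that $\{\pi', \pi\}$ is a chain. Concretely, take $\pi = 34512$, which has the triple adjacency $345$ and no corners, so $\pi' = 3412$. The chain $c = \{1, 2341, 34512\}$ is legitimate ($2341$ embeds in $34512$ at positions $1,2,3,5$), but $2341$ and $3412$ are distinct permutations of the same length and hence incomparable, so $c \cup \{3412\}$ is not totally ordered. This is exactly the pitfall flagged at the end of Section~\ref{section-definitions-and-notation}: one must check that $\Phi(c)$ is a chain, not merely that the added element lies below the top. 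A second symptom of the problem is that your involution never actually uses the triple adjacency: the ``three-fold redundancy'' is mentioned but plays no role, and $\pi'$ is just some element covered by $\pi$. If toggling an arbitrary such element as penultimate entry were a valid parity-reversing involution, it would prove $\mobp{\pi} = 0$ for every $\pi$, contradicting, say, $\mobp{2413} = -3$. Repairing the argument requires partitioning the chains by their second-highest element and doing genuine work in the classes where the insertion fails, in the spirit of the $\chainr$, $\chaing$, $\chainb$ decomposition used later in the paper.
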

A trivial corollary to 
Lemma~\ref{lemma-triple-adjacencies} is
\begin{corollary}
    \label{corollary-monotonic-interval}
    If a permutation contains a monotonic interval
    with length 3 or more, then $\mobp{\pi} = 0$.    
\end{corollary}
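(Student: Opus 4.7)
The plan is to reduce the corollary directly to Lemma~\ref{lemma-triple-adjacencies} by exhibiting a triple adjacency inside the hypothesised monotonic interval. Suppose $\pi$ contains a monotonic interval on positions $i, i+1, \ldots, j$ with $j - i + 1 \geq 3$. By the definition of an interval, the value set $\{\pi_i, \pi_{i+1}, \ldots, \pi_j\}$ is contiguous, say equal to $\{a, a+1, \ldots, a + (j-i)\}$, and by the assumption of monotonicity these values appear in either strictly increasing or strictly decreasing order along the positions.

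I would then restrict attention to the first three positions $i, i+1, i+2$, which exist because the interval has length at least $3$. Monotonicity together with the contiguity of the whole value set forces the values at these three positions to be three consecutive integers arranged monotonically; in particular, the positions $i, i+1, i+2$ form a contiguous block whose values $\{\pi_i, \pi_{i+1}, \pi_{i+2}\}$ are also contiguous. This is precisely a monotonic interval of length $3$, i.e.\ a triple adjacency in $\pi$.

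Applying Lemma~\ref{lemma-triple-adjacencies} then yields $\mobp{\pi} = 0$, which is the desired conclusion. The only potential obstacle is purely definitional: one must verify from the definitions of ``interval'' and ``monotonic'' given in Section~\ref{section-definitions-and-notation} that a sub-block of consecutive positions inside a longer monotonic interval is itself a monotonic interval. Since contiguity of values is inherited by consecutive sub-blocks under monotonicity, this is immediate, and no further work is required.
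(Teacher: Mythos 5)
Your argument is correct and is exactly the intended one: the paper treats this as a trivial corollary of Lemma~\ref{lemma-triple-adjacencies} without spelling out a proof, and the observation that the first three positions of a monotonic interval of length at least $3$ themselves form a triple adjacency is precisely what makes it trivial. No gaps.
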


%
%
A \emph{chain} in a poset interval $[1, \pi]$ is,
for our purposes,
a subset of the permutations in the interval $[1, \pi]$,
where the subset includes the elements $1$ and $\pi$,
and any two elements of the subset are comparable.
This last clause means that the subset has a total order.
If a chain $c$ has $C$ elements,
then we say that the length of $c$, written $\order{c}$,
is $C - 1$.

%
%
Philip Hall's Theorem\cite[Proposition 3.8.5]{Stanley2012} says 
that 
\[
\mobfn{\sigma}{\pi} = 
\sum_{c \in \chainc(\sigma, \pi)} (-1)^{\order{c}}  =
\sum_{i=1}^{\order{\pi} - 1} (-1)^i K_i
\]
where $\chainc(\sigma, \pi)$ is the set of chains 
in the poset interval $[\sigma, \pi]$ which  contain both $\sigma$ and $\pi$,
and $K_i$ is the number of chains of length $i$.

%
%
If $\chainc$ is a subset of the chains 
in some poset interval $[\sigma, \pi]$,
then the \emph{Hall sum} of $\chainc$ is
$\sum_{c \in \chainc} (-1)^{\order{c}}$.

%
%
A \emph{parity-reversing involution}, $\Phi: \chainc \mapsto \chainc$,
is an involution 
such that for any $c \in \chainc$,
the parities of $c$ and $\Phi(c)$
are different.

%
%
A simple corollary to Hall's Theorem is
\begin{corollary}
    \label{corollary-halls-corollary}
    If we can find a set of chains $\chainc$
    with a parity-reversing involution,
    then the Hall sum of $\chainc$ is zero.
\end{corollary}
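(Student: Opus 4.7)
The plan is to exploit the parity-reversing involution $\Phi$ to pair up chains of opposite parity so that their contributions to the Hall sum cancel in pairs. The Hall sum of $\chainc$ is by definition $\sum_{c \in \chainc} (-1)^{\order{c}}$, and I want to reorganize this sum along the orbits of $\Phi$ on $\chainc$.

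First I would establish that $\Phi$ has no fixed points. If $\Phi(c) = c$ for some chain $c$, then $c$ and $\Phi(c)$ would be the same chain and hence have the same length, in particular the same parity, contradicting the defining property that the parities of $c$ and $\Phi(c)$ differ. Combined with the fact that $\Phi$ is an involution, this shows that $\Phi$ partitions $\chainc$ into orbits of size exactly $2$.

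Next, for each orbit $\{c, \Phi(c)\}$ the contribution to the Hall sum is $(-1)^{\order{c}} + (-1)^{\order{\Phi(c)}}$, and since the two exponents have opposite parities one term is $+1$ and the other is $-1$, so their sum is $0$. Summing over the collection of orbits, which together partition $\chainc$, yields a total of $0$, as required.

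There is no substantive obstacle here; the one place where care is needed is the fixed-point-free observation, since that is precisely the step that converts the parity-reversing hypothesis into pairwise cancellation rather than reinforcement.
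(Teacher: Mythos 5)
Your proof is correct and takes essentially the same approach as the paper: the paper's one-line argument that a parity-reversing involution forces equally many odd-length and even-length chains is exactly your orbit-pairing argument, which you simply spell out in more detail (including the useful observation that $\Phi$ is fixed-point-free).
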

\begin{proof}
    Because there is a parity-reversing involution, 
    the number of chains in $\chainc$ with odd length is
    equal to the number of chains with even length,
    so $\sum_{c \in \chainc} (-1)^{\order{c}}  = 0$.  	
\end{proof}

%
%
We can also use Hall's Theorem if we have a subset of chains
that meet a specific criteria:
\begin{lemma}
    \label{lemma-hall-sum-second-element-psi}
    Let $\pi$ be any permutation with length three or more.
    Let $\psi$ be a permutation with $1 < \psi < \pi$.
    Let $\chainc$ be the subset of chains in 
    the poset interval $[1, \pi]$ where the second-highest element is $\psi$.
    Then 
    \[\sum\limits_{c \in \chainc}  (-1)^{\order{c}} = - \mobp{\psi}.\]
\end{lemma}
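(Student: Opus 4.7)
The plan is a straightforward bijection argument combined with Hall's Theorem. Every chain $c \in \chainc$ has $\pi$ as its top element and $\psi$ as its second-highest element, so it has the form $c = c' \cup \{\pi\}$ where $c'$ is a chain in the interval $[1,\psi]$ that contains both $1$ and $\psi$. The map $c \mapsto c' = c \setminus \{\pi\}$ is a bijection between $\chainc$ and $\chainc(1,\psi)$, the set of all chains in $[1,\psi]$ containing the two endpoints: the inverse adjoins $\pi$ to the top of a chain, which is legitimate because $\psi < \pi$ implies $\pi \notin [1,\psi]$, and the resulting chain in $[1,\pi]$ has $\psi$ as its second-highest element by construction.

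The key point is the parity shift. Since length is defined as one less than the number of elements, adjoining the single new element $\pi$ increases length by exactly one, so $\order{c} = \order{c'} + 1$ for each corresponding pair. Therefore
\[
\sum_{c \in \chainc} (-1)^{\order{c}} \;=\; \sum_{c' \in \chainc(1,\psi)} (-1)^{\order{c'}+1} \;=\; -\sum_{c' \in \chainc(1,\psi)} (-1)^{\order{c'}}.
\]
Applying Philip Hall's Theorem to the interval $[1,\psi]$, which is well-defined because $1 < \psi$, the remaining sum equals $\mobp{\psi}$, and the claim follows.

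There is no real obstacle here; the argument is a one-line consequence of Hall's Theorem once the bijection is set up. The only points worth checking carefully are that the hypothesis $\psi < \pi$ is what makes the adjunction well-defined (otherwise $\pi$ might already lie in the smaller interval), and that the restriction $\order{\pi} \geq 3$ in the hypothesis merely ensures that there is room for a $\psi$ strictly between $1$ and $\pi$, so the statement is non-vacuous.
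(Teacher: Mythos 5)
Your argument is correct and is essentially the paper's own proof: both remove $\pi$ from each chain to obtain exactly the chains of $[1,\psi]$ containing both endpoints, note the length drops by one (flipping parity), and invoke Hall's Theorem on $[1,\psi]$. You simply spell out the bijection and the parity bookkeeping more explicitly than the paper does.
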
	
\begin{proof}
    If we remove $\pi$ from the chains in $\chainc$, then
    we have all of the chains in the poset interval $[1, \psi]$,
    and the Hall sum of these chains is, by definition,
    $\mobp{\psi}$.  It follows that the Hall sum
    of the chains in $\chainc$ is $ - \mobp{\psi}$.	
\end{proof}
\begin{corollary}
    \label{corollary-hall-sum-second-highest-set}
    Given a permutation $\pi$,
    and a set of permutations $S$
    where every $\sigma \in S$ satisfies
    $1 < \sigma < \pi$,
    then 
    if $\chainc$ is the set of chains in the poset interval
    $[1, \pi]$ where the second-highest element is in $S$,
    then the Hall sum of $\chainc$ is
    $- \sum_{\sigma \in S} \mobp{\sigma}$.    
\end{corollary}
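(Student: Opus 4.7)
The plan is to partition $\chainc$ according to the second-highest element of each chain and then apply Lemma~\ref{lemma-hall-sum-second-element-psi} block by block. For each $\sigma \in S$, let $\chainc_\sigma \subseteq \chainc$ denote the sub-collection of chains whose second-highest element is exactly $\sigma$. Every chain in $\chainc$ contains $\pi$ as its top element and, by hypothesis on $\chainc$, contains at least one further element (its second-highest element) lying in $S$, which in turn sits strictly below $\pi$; so the second-highest element is well-defined and distinct from $\pi$. Consequently we have the disjoint decomposition $\chainc = \bigsqcup_{\sigma \in S} \chainc_\sigma$.

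Now Lemma~\ref{lemma-hall-sum-second-element-psi} applies to each block, because $1 < \sigma < \pi$ by the hypothesis on $S$, and it gives $\sum_{c \in \chainc_\sigma}(-1)^{\order{c}} = -\mobp{\sigma}$. Summing over $\sigma \in S$ and invoking disjointness then yields
\[
\sum_{c \in \chainc} (-1)^{\order{c}} \;=\; \sum_{\sigma \in S} \sum_{c \in \chainc_\sigma} (-1)^{\order{c}} \;=\; -\sum_{\sigma \in S} \mobp{\sigma},
\]
which is the desired identity.

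There is essentially no genuine obstacle here: the corollary is a straightforward bookkeeping consequence of the preceding lemma. The only point that warrants care is verifying that the partition by second-highest element is both well-defined and exhaustive on $\chainc$, and this is immediate from the setup once one notes that each chain in $\chainc$ has size at least two.
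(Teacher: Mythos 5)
Your proof is correct and follows exactly the paper's own argument: partition $\chainc$ by the second-highest element and apply Lemma~\ref{lemma-hall-sum-second-element-psi} to each block. You merely spell out the bookkeeping (well-definedness and exhaustiveness of the partition) in more detail than the paper does.
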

\begin{proof}	
    First, partition $\chainc$ based on the second-highest element,
    and then
    apply Lemma~\ref{lemma-hall-sum-second-element-psi} to each
    partition.
\end{proof}

%
%
When discussing chains, in general we will only be interested
in a small subset of the chain containing two or three elements.
We say that a \emph{segment} of some chain $c$ 
is a non-empty subset of the elements in $c$ with the property that
any element not in the segment is 
either less than every element in the segment,
or is greater than every element in the segment.

%
%
In our proofs, given a set of chains $\chainc$,
and a chain $c \in \chainc$,
we will frequently want to construct a chain $\cprime$
by using a parity-reversing involution $\Phi$.
Strictly speaking, $\Phi$ 
is a function that maps a set of permutations
(which is a chain)
to a set of permutations
(which may not be a chain).
As examples, if $\Phi(c)$
removes the largest or smallest element of $c$,
or adds an element so that $\Phi(c)$ does not have
a total order, then $\Phi(c)$ is not a chain.
To show that $\Phi$ is a parity-reversing involution
we will need to show that $\Phi(c)$ is a chain in $\chainc$,
and that $c$ and $\Phi(c)$ have opposite parities.
In our discussions, we will typically set
$\cprime = \Phi(c)$, 
and then show that the 
set of permutations $\cprime$ is a chain.
We will then, without further comment, 
treat $\cprime$ as a chain.

\section{2413-Balloons}
\label{section-define-2413-balloon}

In this section we define the vocabulary and notation
specific to this paper.
We also present some general results which will be used
in later sections.

%
%
Given a non-empty permutation $\beta$,
the \emph{2413-balloon} of $\beta$
is the permutation formed by
inserting $\beta$ into the centre of $2413$,
which we write as $\ball{2413}{\beta}$.
Formally, we have
\begin{align*}
(\ball{2413}{\beta})_i
& =
\begin{cases}
2 & \text{if $i = 1$}\\
\order{\beta} + 4 & \text{if $i = 2$}\\
\beta_{i-2} + 2 & \text{if $i > 2$ and $i \leq \order{\beta} + 2$ }\\
1 & \text{if $i = \order{\beta} + 3$}\\
\order{\beta} + 3 & \text{if $i = \order{\beta} + 4$}\\ 
\end{cases}
\end{align*}
Figure~\ref{figure-2413-balloons-a} 
shows $\ball{2413}{\beta}$.
\begin{figure}
    \begin{center}
        \begin{subfigure}[t]{0.35\textwidth}
            \begin{center}
                \begin{tikzpicture}[scale=0.5]
                \sgrid{5};
                \point{1}{2};
                \point{2}{5};
                \scell{3}{3}{$\beta$};
                \point{4}{1};
                \point{5}{4};
                \end{tikzpicture}
            \end{center}
            \caption{}
            \label{figure-2413-balloons-a} 
        \end{subfigure}
        \begin{subfigure}[t]{0.55\textwidth}
            \begin{center}
                \begin{tikzpicture}[scale=0.5]
                \sgrid{9};
                \point{1}{2};
                \point{2}{9};
                \point{3}{4};
                \point{4}{7};
                \scell{5}{5}{$\gamma$};
                \point{6}{3};
                \point{7}{6};
                \point{8}{1};
                \point{9}{8};
                \end{tikzpicture}
            \end{center}
            \caption{}
            \label{figure-2413-balloons-b} 
        \end{subfigure}
    \end{center}
    \caption{
        (a) The 2413-balloon $\ball{2413}{\beta}$ and
        (b) the double 2413-balloon $\ball{2413}{\ball{2413}{\gamma}}$.}
    \label{figure-2413-balloons} 
\end{figure}

%
%
The balloon operation as defined has to be
right-associative and the definition given does not
support overriding right-associativity.
In other words, 
$\ball{2413}{\ball{2413}{\beta}}$
must be
$\ball{2413}{(\ball{2413}{\beta})}$,
and
$\ball{(\ball{2413}{2413})}{\beta}$ is not defined.
In 
Section~\ref{section-concluding-remarks} we  
suggest how the balloon operation could be generalized.

%
%
Given some $\pi = \ball{2413}{\beta}$,
if $\beta$ is itself a 2413-balloon,
so $\pi = \ball{2413}{\ball{2413}{\gamma}}$,
then we say that $\pi$ is a \emph{double 2413-balloon}.
Figure~\ref{figure-2413-balloons-b} shows a double 2413-balloon.

%
%
\begin{remark}
    We note that we can write $\ball{2413}{\beta}$
    as the inflation  
    $25314 [1,1,\beta,1,1]$ 
    (see Albert and Atkinson~\cite{Albert2005}
    for further details of inflations).
    In this paper we use balloon notation,
    as we feel that this leads to a simpler exposition.
\end{remark}

%
%
If we have $\pi = \ball{2413}{\beta}$,
and we have some $\sigma$ 
with $\beta \leq \sigma < \pi$,
we will frequently want to 
represent $\sigma$
in terms of 
sub-permutations of $2413$ and 
the permutation $\beta$.
We start by colouring 
the extremal points of $\pi$ red,
and all remaining points black.  
Note that the red points are a 2413 permutation,
and the black points are $\beta$.

Now consider 
a specific embedding of $\sigma$ into $\pi$,
where we use all of the black points ($\beta$).
If the embedding is 
monochromatic ($\sigma = \beta$) then
we require no special notation.
If the embedding is not monochromatic,
then it must be the case that
only some of the red points are used.
We take $2413$, and mark the red points that are 
unused with an overline, and then write
$\sigma$ using our balloon notation.
As an example of this, 
if
$\pi = \ball{2413}{21} = 264315$, and $\sigma = 213$,
then we could represent $\sigma$ as
$\ball{\ex{2}\ex{4}\ex{1}3}{21}$.
This example is shown in Figure~\ref{figure-overbar-notation}.
\begin{figure}
    \begin{center}
        \begin{tikzpicture}[scale=0.5]
            \sgrid{6}
            \opoint{1}{2}
            \opoint{2}{6}
            \point{3}{4}
            \point{4}{3}
            \opoint{5}{1}
            \point{6}{5}
        \end{tikzpicture}
    \end{center}
    \caption{An embedding of $213 = \ball{\ex{2}\ex{4}\ex{1}3}{21}$ in $264315 = \ball{2413}{21}$.}
    \label{figure-overbar-notation}
\end{figure}
We can see that if 
$\beta \leq \sigma < \ball{2413}{\beta}$, 
and $\beta$ is not monotonic (i.e., not the 
identity permutation or its reverse), then
there is a unique way to represent
$\sigma$ using this notation.

%
%
If we have $\pi = \ball{2413}{\beta}$, 
and $\sigma$ is a permutation such that
$\beta \leq \sigma < \pi$,
then we say that
$\sigma$ is a \emph{reduction} of $\pi$.
If $\sigma$ is a reduction of $\pi = \ball{2413}{\beta}$,
and there is no $\eta$ with $\order{\eta} <\order{\beta}$
such that $\sigma$ is a reduction of $\ball{2413}{\eta}$,
then we say that $\sigma$ is a
\emph{proper reduction} of $\pi$.
A reduction of $\pi$ that is not a proper reduction
is an \emph{improper reduction}.

The following case-by-case analysis shows the
improper reductions (of $\pi$)
based on the form of $\beta$.

\begin{itemize}
	\item If $\beta$ is a 2413-balloon,
	then
	$\beta$ is the only improper reduction of $\pi$.
	\item If $\beta$ is not a 2413-balloon,
	and $\beta$ has no corners,
	then
    there are no improper reductions of $\pi$.
    \item If $\beta$ has one corner,
    then there are four improper reductions of $\pi$.
    As an example, if $\beta = 1 \oplus \gamma$, then the 
    improper reductions of $\pi$ are
    $\ball{\ex{2}\ex{4}13}{\beta}$,
    $\ball{\ex{2}\ex{4}1\ex{3}}{\beta}$,
    $\ball{\ex{2}\ex{4}\ex{1}3}{\beta}$,
    and
    $\beta$.  
    \item If $\beta$ has two corners, then
    there are seven improper reductions of $\pi$.
    As an example, if $\beta = 1 \oplus \gamma \oplus 1$,
    then the improper reductions are
    $\ball{\ex{2}\ex{4}13}{\beta}$,
    $\ball{24\ex{1}\ex{3}}{\beta}$,
    $\ball{2\ex{4}\ex{1}\ex{3}}{\beta}$,
    $\ball{\ex{2}4\ex{1}\ex{3}}{\beta}$,
    $\ball{\ex{2}\ex{4}1\ex{3}}{\beta}$,
    $\ball{\ex{2}\ex{4}\ex{1}3}{\beta}$, 
    and    
    $\beta$.
\end{itemize}

The set of permutations that are 
proper reductions of $\pi$
is written as $\redset$.
Figure~\ref{figure-2413-reductions}
shows all the reductions (proper and improper)
of $\pi = \ball{2413}{\beta}$.

\begin{figure}
    \begin{center}
        \begin{subfigure}[t]{0.2\textwidth}
            \centering
            \begin{tikzpicture}[scale=0.5]
            \sgrid{4}
            \point{1}{4}
            \scell{2}{2}{$\beta$}
            \point{3}{1}
            \point{4}{3}
            \end{tikzpicture}
            \caption*{$\ball{\ex{2}413}{\beta}$}
        \end{subfigure}
        \begin{subfigure}[t]{0.2\textwidth}
            \centering
            \begin{tikzpicture}[scale=0.5]
            \sgrid{4}
            \point{1}{2}
            \scell{2}{3}{$\beta$}
            \point{3}{1}
            \point{4}{4}
            \end{tikzpicture}
            \caption*{$\ball{2\ex{4}13}{\beta}$}
        \end{subfigure}
        \begin{subfigure}[t]{0.2\textwidth}
            \centering
            \begin{tikzpicture}[scale=0.5]
            \sgrid{4}
            \point{1}{1}
            \point{2}{4}
            \scell{3}{2}{$\beta$}
            \point{4}{3}
            \end{tikzpicture}
            \caption*{$\ball{24\ex{1}3}{\beta}$}
        \end{subfigure}
        \begin{subfigure}[t]{0.2\textwidth}
            \centering
            \begin{tikzpicture}[scale=0.5]
            \sgrid{4}
            \point{1}{2}
            \point{2}{4}
            \scell{3}{3}{$\beta$}
            \point{4}{1}
            \end{tikzpicture}
            \caption*{$\ball{241\ex{3}}{\beta}$}
        \end{subfigure}	
        \vspace{1\baselineskip}
    \end{center} 
    \begin{center}
        \begin{subfigure}[t]{0.15\textwidth}
            \centering
            \begin{tikzpicture}[scale=0.5]
            \sgrid{3}
            \scell{1}{2}{$\beta$}
            \point{2}{1}
            \point{3}{3}
            \end{tikzpicture}
            \caption*{$\ball{\ex{2}\ex{4}13}{\beta}$}
        \end{subfigure}
        \begin{subfigure}[t]{0.15\textwidth}
            \centering
            \begin{tikzpicture}[scale=0.5]
            \sgrid{3}
            \point{1}{3}
            \scell{2}{1}{$\beta$}
            \point{3}{2}
            \end{tikzpicture}
            \caption*{$\ball{\ex{2}4\ex{1}3}{\beta}$}
        \end{subfigure}
        \begin{subfigure}[t]{0.15\textwidth}
            \centering
            \begin{tikzpicture}[scale=0.5]
            \sgrid{3}
            \point{1}{3}
            \scell{2}{2}{$\beta$}
            \point{3}{1}
            \end{tikzpicture}
            \caption*{$\ball{\ex{2}41\ex{3}}{\beta}$}
        \end{subfigure}
        \begin{subfigure}[t]{0.15\textwidth}
            \centering
            \begin{tikzpicture}[scale=0.5]
            \sgrid{3}
            \point{1}{1}
            \scell{2}{2}{$\beta$}
            \point{3}{3}
            \end{tikzpicture}
            \caption*{$\ball{2\ex{4}\ex{1}3}{\beta}$}
        \end{subfigure}	
        \begin{subfigure}[t]{0.15\textwidth}
            \centering
            \begin{tikzpicture}[scale=0.5]
            \sgrid{3}
            \point{1}{2}
            \scell{2}{3}{$\beta$}
            \point{3}{1}
            \end{tikzpicture}
            \caption*{$\ball{2\ex{4}1\ex{3}}{\beta}$}
        \end{subfigure}	
        \begin{subfigure}[t]{0.15\textwidth}
            \centering
            \begin{tikzpicture}[scale=0.5]
            \sgrid{3}
            \point{1}{1}
            \point{2}{3}
            \scell{3}{2}{$\beta$}
            \end{tikzpicture}
            \caption*{$\ball{24\ex{1}\ex{3}}{\beta}$}
        \end{subfigure}	
        \vspace{1\baselineskip}
    \end{center}
    \begin{center}
        \begin{subfigure}[t]{0.15\textwidth}
            \centering
            \begin{tikzpicture}[scale=0.5]
            \sgrid{2}
            \point{1}{1}
            \scell{2}{2}{$\beta$}
            \end{tikzpicture}
            \caption*{$\ball{2\ex{4}\ex{1}\ex{3}}{\beta}$}
        \end{subfigure}
        \begin{subfigure}[t]{0.15\textwidth}
            \centering
            \begin{tikzpicture}[scale=0.5]
            \sgrid{2}
            \point{1}{2}
            \scell{2}{1}{$\beta$}
            \end{tikzpicture}
            \caption*{$\ball{\ex{2}4\ex{1}\ex{3}}{\beta}$}
        \end{subfigure}
        \begin{subfigure}[t]{0.15\textwidth}
            \centering
            \begin{tikzpicture}[scale=0.5]
            \sgrid{2}
            \scell{1}{2}{$\beta$}
            \point{2}{1}
            \end{tikzpicture}
            \caption*{$\ball{\ex{2}\ex{4}1\ex{3}}{\beta}$}
        \end{subfigure}	
        \begin{subfigure}[t]{0.15\textwidth}
            \centering
            \begin{tikzpicture}[scale=0.5]
            \sgrid{2}
            \scell{1}{1}{$\beta$}
            \point{2}{2}
            \end{tikzpicture}
            \caption*{$\ball{\ex{2}\ex{4}\ex{1}3}{\beta}$}
        \end{subfigure}
        \begin{subfigure}[t]{0.15\textwidth}
    	    \centering
        	\begin{tikzpicture}[scale=0.5]
    	    \sgrid{1}
        	\scell{1}{1}{$\beta$}
        	\end{tikzpicture}
    	    \caption*{$\beta$}
    \end{subfigure}
    \end{center}
    \caption{Reductions of $\pi = \ball{2413}{\beta}$.  Some may not be proper reductions, depending on $\beta$.}	      
    \label{figure-2413-reductions}
\end{figure}

%
%
The strategy that we will use 
in 
Sections~\ref{section-2413-double-balloons} 
and~\ref{section-2413-balloons} is
to partition the chains in the poset interval 
$[1,\pi]$ into three sets,
$\chainr$, 
$\chaing$, and
$\chainb$. 
We then show that there are parity-reversing involutions
on the sets 
$\chaing$ and
$\chainb$,
and therefore, by 
Corollary~\ref{corollary-halls-corollary},
the Hall sum for each of these sets is zero,
and so $\mobp{\pi}$ is given by the 
Hall sum of the set $\chainr$.
Finally, we show that the Hall sum of $\chainr$
can be written in terms of $\mobp{\beta}$.

%
%
The chains in $\chainr$ are those chains
where the second-highest element 
is a proper reduction of $\pi$,
so if $\kappa_c$ is the second-highest
element of a chain $c$, then 
$c \in \chainr$ if and only if
$\kappa_c \in \redset$.
Note that, as mentioned earlier,
the members of $\redset$,
and hence the chains in $\chainr$,
depend on the form of $\pi$.
It is easy to see that for any permutation $\sigma \in \redset$
we must have $\order{\sigma} \geq \order{\beta}$.

We have some results that
are independent of $\redset$, 
and, once we have given some some further 
definitions, we present these in
the current section
to avoid repetition.

Let $\pi$ be a 2413-balloon, and let $c$ be any chain
in the poset interval $[1, \pi]$.

%
%
Since the top of the chain is, by definition, a 2413-balloon,
it follows that $c$ has a unique maximal
segment that includes the element $\pi$,
where every element in the segment is a 2413-balloon.
We call the smallest element 
in this segment 
the \emph{least 2413-balloon}\footnote{The name 
    should really be ``least 2413-balloon in the chain that has only 2413-balloons above it''.}.

%
%
Further, since the permutation 1 is not a 2413-balloon,
it follows that $c$ has an element that is immediately below
the least 2413-balloon in the chain,
and we call this element the \emph{pivot}.

%
%
We define $\phi_c$ to be the least 2413-balloon in $c$,
$\psi_c$ to be the pivot in $c$,
$\tau_c$ to be the permutation that satisfies
$\ball{2413}{\tau_c} = \phi_c$,
and $\kappa_c$ to be the second-highest element of $c$.
Note that $\phi_c$ and $\psi_c$ must be distinct,
but we can have $\tau_c = \psi_c$.
Further, $\kappa_c$ is independent, and 
may be the same as $\phi_c$, $\psi_c$ or $\tau_c$.
Figure~\ref{figure-example-chains} shows some
example chains, highlighting these elements.
%
%
\begin{figure}
    \begin{center}
                \begin{tikzpicture}[]                
                \draw [] (0,5) -- (0,4);
                \draw [dotted] (0,4) -- (0,3);
                \draw [] (0,3) -- (0,2);
                \draw [dotted] (0,2) -- (0,1);
                \spoint{0}{5};
                \spoint{0}{4};
                \spoint{0}{3};
                \spoint{0}{2};
                \node [right] at (0.1, 5.0) {$\pi = \ball{2413}{\beta}$};
                \node [right] at (0.1, 4.0) {$\kappa_c$};
                \node [right] at (0.1, 3.0) {$\phi_c= \ball{2413}{\tau_c}$};
                \node [right] at (0.1, 2.0) {$\psi_c$};
                \draw [] (4,5) -- (4,4);
                \draw [] (4,4) -- (4,3);
                \draw [dotted] (4,3) -- (4,2);
                \spoint{4}{5};
                \spoint{4}{4};
                \spoint{4}{3};
                \node [right] at (4.1, 5.0) {$\pi = \ball{2413}{\beta}$};
                \node [right] at (4.1, 4.0) {$\kappa_c = \phi_c$};
                \node [right] at (4.1, 3.6) {$\phantom{\kappa_c} = \ball{2413}{\tau_c}$};
                \node [right] at (4.1, 3.0) {$\psi_c$};
                \draw [] (8,5) -- (8,3);
                \draw [dotted] (8,3) -- (8,2);
                \spoint{8}{5};
                \spoint{8}{3};
                \node [right] at (8.1, 5.0) {$\pi = \ball{2413}{\beta}$};
                \node [right] at (8.1, 4.6) {$\phantom{\pi} = \phi_c$};
                \node [right] at (8.1, 4.2) {$\phantom{\pi} = \ball{2413}{\tau_c}$};
                \node [right] at (8.1, 3.0) {$\kappa_c = \psi_c$};
                \end{tikzpicture}
    \end{center}
    \caption{Examples of chains, showing some possible relationships between $\pi$, $\kappa_c$, $\phi_c$, and $\psi_c$.}            
    \label{figure-example-chains}
\end{figure}    

%
%
We are now in a position to give a definition of the 
sets
$\chainr$,
$\chaing$, and
$\chainb$.
This definition
depends on the set of proper reductions of $\pi$, $\redset$,
which, as stated earlier, 
depends on the form of $\beta$.

Let $\chainc$ be the set of chains in the poset interval $[1, \pi]$.
We define subsets of $\chainc$ as follows:
\begin{align*}
\chainr &= \{ c : c \in \chainc \text{ and } \kappa_c \in \redset \}, \\
\chaing &= \{ c : c \in \chainc \setminus \chainr \text{ and } \psi_c \leq 2413 \}, \\
\chainb &= \{ c : c \in \chainc \setminus (\chainr \cup \chaing) \}.
\end{align*}
Clearly, every chain in $[1, \pi]$ is included
in exactly one of these subsets, 
and so these sets are a partition of the chains.

%
%
Given a pivot $\psi_c$, there is a unique 
permutation $\eta_c$ which we call the \emph{core}
of $\psi_c$.  
In essence, $\eta_c$ is the smallest permutation
such that $\psi_c < \ball{2413}{\eta_c}$.
To determine the core,
we use the following algorithm:
\begin{align*}
\text{If $\psi_c$ can be written as~~}
& 1 \ominus ( ( \eta \ominus 1 ) \oplus 1) 
\text{~~or~~}
( ( 1 \oplus \eta ) \ominus 1 ) \oplus 1 \\
\text{or~~}
&
1 \oplus ( 1 \ominus ( \eta \oplus 1 ) ) 
\text{~~or~~}
( 1 \oplus ( 1 \ominus \eta ) ) \ominus 1, \\
\text{then set~~}
& \eta_c = \eta.\\
\text{Otherwise, if $\psi_c$ can be written as~~}
& ( \eta \ominus 1 ) \oplus 1 
\text{~~or~~}
1 \ominus ( \eta \oplus 1 ) 
\text{~~or~~}
1 \ominus \eta \ominus 1 \\
\text{or~~}
& 1 \oplus \eta \oplus 1 
\text{~~or~~}
( 1 \oplus \eta ) \ominus 1 
\text{~~or~~}
1 \oplus ( 1 \ominus \eta ), \\
\text{then set~~}
& \eta_c = \eta.\\
\text{Otherwise, if $\psi_c$ can be written as~}
& 1 \oplus \eta 
\text{~~or~~}
1 \ominus \eta 
\text{~~or~~}
\eta \ominus 1 
\text{~~or~~}
\eta \oplus 1, \\
\text{then set~~}
& \eta_c = \eta.\\
\text{Otherwise, set~~}
& \eta_c = \psi_c.\\
\end{align*}

Since we have $\psi_c < \phi_c = \ball{2413}{\tau_c}$,
it is easy to see that 
$\eta_c \leq \tau_c$.
Note that $\ball{2413}{\eta_c}$ is the smallest 2413-balloon
that contains $\psi_c$.

%
%
We now define two functions, one for each of
$\chaing$ and
$\chainb$,
which will give us parity-reversing involutions.

\begin{align*}
\Phi_{\chaing}(c) & =
\begin{cases}
c \setminus \{2413\} & \text{If $\psi_c = 2413$} \\
c \cup \{2413\} & \text{If $\psi_c < 2413$} \\
\end{cases} \\
\Phi_{\chainb}(c) & =
\begin{cases}
c \setminus \{\ball{2413}{\eta_c}\} & \text{If $\eta_c = \tau_c$} \\
c \cup \{\ball{2413}{\eta_c}\} & \text{If $\eta_c < \tau_c$} \\
\end{cases} \\
\end{align*}

\begin{remark}
    If we were to allow the ballooning of the empty permutation $\epsilon$,
    and then treat $2413$ as $\ball{2413}{\epsilon}$
    then $\Phi_{\chaing}(c)$ is subsumed by 
    $\Phi_{\chainb}(c)$.  Doing this, however, introduces 
    additional complications in later proofs, 
    and so we prefer two involutions.
\end{remark}

For $\Phi_{\chaing}(c)$ to be a 
parity-reversing involution on $\chaing$,
we need to show that
if $c \in \chaing$, then
$\Phi_{\chaing}(c)$ is a chain, 
that $\Phi(c) \in \chaing$, 
and that
$c$ and $\Phi(c)$ have different parities.
It is easy to see that 
this last condition is true.
A similar comment applies to 
$\Phi_{\chainb}(c)$ and $\chainb$.

For $\Phi_{\chaing}(c)$ we can show that 
all the conditions hold for any $\redset$,
regardless of the form of $\beta$.
For $\Phi_{\chainb}(c)$
we show that some weaker conditions hold
for an arbitrary subset of the reductions of $\pi$,
and then, when we have an explicit set of proper reductions,
we show that all conditions hold.
The following Lemma
gives us a result that applies to 
$\Phi_{\chaing}(c)$ and $\Phi_{\chainb}(c)$
for any $\redset$,
and we
will use this result in both 
Section~\ref{section-2413-double-balloons}
and
Section~\ref{section-2413-balloons}.

%
%
\begin{lemma}
    \label{lemma-2413-balloon-phi-gb}
    Let $\pi = \ball{2413}{\beta}$, 
    with $\order{\beta} > 4$, 
    and let 
    $\chainr$,
    $\chaing$, and
    $\chainb$
    be as defined above.
    
    \begin{enumerate}[label=(\alph*)]
        \item \label{enum-lemma-balloon-g}
        If $c \in \chaing$, 
        then $\Phi_{\chaing}(c) \in \chaing$.
        
        \item \label{enum-lemma-balloon-b-eq}
        If $c \in \chainb$, 
        with $\eta_c = \tau_c$,
        and $\Phi_{\chainb}(c)$ is a chain,
        then $\Phi_{\chainb}(c) \in \chainb \cup \chainr$.
        
        \item \label{enum-lemma-balloon-b-lt}
        If $c \in \chainb$, 
        with $\eta_c < \tau_c$,
        then $\Phi_{\chainb}(c) \in \chainb \cup \chainr$.
    \end{enumerate}
\end{lemma}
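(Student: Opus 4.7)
The plan is to handle the three parts separately. In each case, the new chain $c' := \Phi(c)$ is obtained by a single insertion or deletion, and I would track how this alters the key quantities $\phi_c$, $\psi_c$, and $\kappa_c$. For each part I first verify that $c'$ is a chain containing $1$ and $\pi$, and then check the defining conditions of the target subset, which reduce to statements about $\psi_{c'}$ and $\kappa_{c'}$.

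For part (a), I would split on whether $\psi_c = 2413$ or $\psi_c < 2413$. In the first subcase, removing $2413$ from $c$ trivially yields a chain (note $2413 \neq \pi$ since $\order{\beta} > 4$), so I would just verify set membership: the new pivot $\psi_{c'}$ lies strictly below $2413$, hence $\psi_{c'} \leq 2413$; and the new $\kappa_{c'}$ is either the old $\kappa_c \notin \redset$, or, in the corner case $\phi_c = \pi$ with $\kappa_c = \psi_c = 2413$, a permutation strictly contained in $2413$ of size at most three, which cannot lie in $\redset$ since every element there has size at least $\order{\beta} > 4$. In the second subcase, I would show that $2413$ is comparable with everything in $c$ by sandwiching: $2413 < \phi_c$ since $\phi_c = \ball{2413}{\tau_c}$ contains $2413$, and $\psi_c < 2413$ by assumption, while other elements of $c$ lie either at or below $\psi_c$ or at or above $\phi_c$. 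After insertion, $\psi_{c'} = 2413$ and $\kappa_{c'}$ is either the old $\kappa_c$ or equals $2413$ itself (when $\phi_c = \pi$), and in both cases is not in $\redset$.

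For parts (b) and (c), the crucial observation is that the new pivot still equals $\psi_c$: in (b), deleting $\phi_c$ means the element immediately below the new least 2413-balloon in $c'$ is whatever sat directly below $\phi_c$ in $c$, namely $\psi_c$; in (c), inserting $\ball{2413}{\eta_c}$ between $\psi_c$ and $\phi_c$ makes it the new least 2413-balloon in $c'$, and the element immediately below it is again $\psi_c$. Since $c \in \chainb$ forces $\psi_c \not\leq 2413$, the new chain $c'$ cannot lie in $\chaing$, hence $c' \in \chainb \cup \chainr$ as required. For (b) the chain property is an assumption (it rules out $\phi_c = \pi$); for (c) I would verify it by checking that $\ball{2413}{\eta_c}$ lies strictly between $\psi_c$ and $\phi_c$, using $\psi_c < \ball{2413}{\eta_c}$ from the definition of the core (no proper initial case applies to a non-balloon $\psi_c$ otherwise) and $\ball{2413}{\eta_c} < \ball{2413}{\tau_c} = \phi_c$ from $\eta_c < \tau_c$, so that the inserted element is comparable with every other element of $c$.

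The main obstacle is not any single deep step but rather the bookkeeping across subcases: correctly identifying $\phi_{c'}$, $\psi_{c'}$ and $\kappa_{c'}$ after the operation, and carefully handling the boundary subcases (particularly when $\phi_c$ or $\psi_c$ coincides with $\kappa_c$, where the second-highest element can shift to a smaller permutation). The hypothesis $\order{\beta} > 4$ is essential for the size argument in (a) that excludes small permutations from $\redset$, and the fact that the pivot $\psi_c$ is never itself a 2413-balloon (since it lies strictly below the least one in $c$) is what keeps $\psi_{c'}$ stable across both operations in (b) and (c).
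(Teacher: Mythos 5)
Your argument mirrors the paper's proof: the same case split on $\psi_c = 2413$ versus $\psi_c < 2413$ in part (a), and the same key observation in parts (b) and (c) that the pivot is unchanged ($\psi_{c'} = \psi_c \not\leq 2413$), which rules out $\chaing$. If anything you are slightly more explicit than the paper in part (a) about distinguishing whether $\kappa_{c'}$ equals the old $\kappa_c$ or shifts to the (small) pivot, but the substance is identical.
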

\begin{proof}
    \textbf{Case~\ref{enum-lemma-balloon-g}.}
    First, assume that $c \in \chaing$ with $\psi_c = 2413$.
    Then $c$ contains a segment 
    $2413 < \ball{2413}{\tau_c}$,
    and $\cprime = \Phi_{\chaing}(c) = c \setminus \{ 2413 \}$.
    We can see that $\cprime$ is a chain,
    as 2413 is neither the smallest nor the largest entry in $\cprime$.
    Further, $\psi_{\cprime} < 2413$.  
    Since $\order{\beta} > 4$, 
    and $\order{\psi_{\cprime}} < 4$
    we must have $\cprime \not \in \chainr$,
    and therefore $\cprime \in \chaing$.
    
    Now assume that $c \in \chaing$ with $\psi_c < 2413$.
    Then $c$ contains a segment 
    $\psi_c < \ball{2413}{\tau_c}$,
    and $\cprime = \Phi_{\chaing}(c) = c \cup \{ 2413 \}$.
    We can see that $\cprime$ is a chain,
    since $\psi_c < 2413 < \ball{2413}{\tau_c}$,
    and further, $\psi_{\cprime} = 2413$.
    Since $\order{\beta} > 4$, 
    and $\order{\psi_{\cprime}} = 4$
    we must have $\cprime \not \in \chainr$,
    and therefore $\cprime \in \chaing$. 
    
    \textbf{Case~\ref{enum-lemma-balloon-b-eq}.}
    Let $c$ be a chain in $\chainb$, 
    with $\eta_c = \tau_c$.
    Then $c$ contains a segment
    $\psi_c < \ball{2413}{\tau_c}$,
    and $\cprime = \Phi_{\chainb}(c) = c \setminus \{ \ball{2413}{\tau_c} \}$.
    If $\tau_c = \beta$, then $\cprime$ is not a chain,
    so we must have $\tau_c < \beta$, and therefore
    $\cprime$ is a chain that contains a segment
    $\psi_c < \ball{2413}{\gamma}$,
    with $\tau_c < \gamma$.
    Now, $\psi_c$ is the pivot of $\cprime$,
    so we cannot have $\cprime \in \chaing$
    as this would imply that $c \in \chaing$,
    which is a contradiction.
    Thus either
    $\cprime \in \chainr$ or $\cprime \in \chainb$.
        
    \textbf{Case~\ref{enum-lemma-balloon-b-lt}.}
    Let $c$ be a chain in $\chainb$, 
    with $\eta_c < \tau_c$.
    Then $c$ contains a segment
    $\psi_c < \ball{2413}{\tau_c}$,
    and $\cprime = \Phi_{\chainb}(c) = c \cup \{ \ball{2413}{\eta_c} \}$.
    We can see that $\cprime$ is a chain since
    $\psi_c < \ball{2413}{\eta_c} < \ball{2413}{\tau_c}$.
    Now, $\psi_c$ is the pivot of $\cprime$,
    so we cannot have $\cprime \in \chaing$
    as this would imply that $c \in \chaing$,
    which is a contradiction.
    So either
    $\cprime \in \chainr$ or $\cprime \in \chainb$.
\end{proof}

%
%
We now have
\begin{observation}
    \label{observation-all-we-have-to-do}
    If $\pi = \ball{2413}{\beta}$,
    with $\order{\beta} > 4$,
    then to show that
    $\Phi_{\chainb}$ is a parity-reversing involution on $\chainb$
    it is sufficient to show that:    
    \begin{enumerate}[label=(\alph*)]
        \item \label{enum-observation-all-we-have-to-do-b-eq}
        If $c \in \chainb$ 
        and $\eta_c = \tau_c$, 
        then $\Phi_{\chainb}(c)$ is a chain, and
        $\Phi_{\chainb}(c) \not\in \chainr$.
                
        \item \label{enum-observation-all-we-have-to-do-b-chain}
        If $c \in \chainb$,
        and $\eta_c < \tau_c$, 
        then $\Phi_{\chainb}(c) \not\in \chainr$.
    \end{enumerate}

    Further,
    if
    $\Phi_{\chainb}$ is a parity-reversing involution on $\chainb$,
    then 
    $\mobp{\pi} = - \sum_{\sigma \in \redset} \mobp{\sigma}$.    
\end{observation}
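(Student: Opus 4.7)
The plan is to treat the Observation in two stages: first, establish that conditions~\ref{enum-observation-all-we-have-to-do-b-eq} and~\ref{enum-observation-all-we-have-to-do-b-chain} upgrade the set-theoretic facts of Lemma~\ref{lemma-2413-balloon-phi-gb} into the full parity-reversing involution property on $\chainb$; second, run Hall's Theorem on the partition $\chainc = \chainr \sqcup \chaing \sqcup \chainb$ to extract the stated formula. The conditions in the observation are precisely the ``missing pieces'' of Lemma~\ref{lemma-2413-balloon-phi-gb}\ref{enum-lemma-balloon-b-eq} and~\ref{enum-lemma-balloon-b-lt}: the lemma only concludes $\Phi_{\chainb}(c) \in \chainb \cup \chainr$ (and in the $\eta_c = \tau_c$ case it additionally assumes $\Phi_{\chainb}(c)$ is a chain), so the observation is essentially asking us to rule out landing in $\chainr$ and to confirm the chain property when needed.

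For the first stage I would fix $c \in \chainb$ and split on whether $\eta_c = \tau_c$ or $\eta_c < \tau_c$. In each case, condition~\ref{enum-observation-all-we-have-to-do-b-eq} or~\ref{enum-observation-all-we-have-to-do-b-chain} combined with Lemma~\ref{lemma-2413-balloon-phi-gb} gives $\Phi_{\chainb}(c) \in \chainb$. The parity reversal is immediate because $\Phi_{\chainb}$ adds or removes exactly one element. The one genuine piece of bookkeeping is checking that $\Phi_{\chainb}$ is actually an involution, i.e.\ that $\Phi_{\chainb}(\Phi_{\chainb}(c)) = c$. Writing $\cprime = \Phi_{\chainb}(c)$, I would track how the four parameters $\phi,\psi,\tau,\eta$ transform: in the $\eta_c = \tau_c$ case, deleting $\ball{2413}{\tau_c}$ leaves $\psi_{\cprime} = \psi_c$ (so $\eta_{\cprime} = \eta_c = \tau_c$), while the new least 2413-balloon $\phi_{\cprime}$ satisfies $\tau_{\cprime} > \tau_c = \eta_{\cprime}$, putting us in the ``$\eta < \tau$'' branch of $\Phi_{\chainb}$, which adds back exactly $\ball{2413}{\tau_c}$. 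The opposite direction is symmetric.

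For the second stage I would use the already-established involution $\Phi_{\chaing}$ (whose involution/parity check is the same style of argument, relying on Lemma~\ref{lemma-2413-balloon-phi-gb}\ref{enum-lemma-balloon-g} and the easy observation that adding/removing $2413$ leaves the rest of the chain intact). Corollary~\ref{corollary-halls-corollary} then kills the Hall sums of both $\chaing$ and $\chainb$, so by Hall's Theorem
\[
\mobp{\pi} \;=\; \sum_{c \in \chainc}(-1)^{\order{c}} \;=\; \sum_{c \in \chainr}(-1)^{\order{c}}.
\]
Since every $\sigma \in \redset$ satisfies $\order{\sigma} \geq \order{\beta} > 4$ and $\sigma < \pi$, Corollary~\ref{corollary-hall-sum-second-highest-set} applied with $S = \redset$ rewrites the right-hand side as $-\sum_{\sigma \in \redset}\mobp{\sigma}$, giving the claim.

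The only real obstacle is the involution check in the first stage, because the parameters $\tau_c, \eta_c, \psi_c, \phi_c$ are defined through a cascade (pivot, then least 2413-balloon, then core), and one has to be sure that removing or inserting $\ball{2413}{\eta_c}$ does not shift the pivot to some lower element of the chain, breaking the ``undo'' property. The condition $\order{\beta} > 4$ plays a role here in avoiding accidental coincidences with the small permutations used in $\Phi_{\chaing}$; everything else is essentially a symmetry argument between the ``delete'' and ``insert'' branches of the definition of $\Phi_{\chainb}$.
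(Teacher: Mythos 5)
Your proposal is correct and takes essentially the same route as the paper: combine conditions (a) and (b) with Lemma~\ref{lemma-2413-balloon-phi-gb} to conclude $\Phi_{\chainb}$ maps $\chainb$ into $\chainb$, then kill the Hall sums of $\chaing$ and $\chainb$ via Corollary~\ref{corollary-halls-corollary} and evaluate the remaining sum over $\chainr$ with Corollary~\ref{corollary-hall-sum-second-highest-set}. The only difference is that you explicitly verify the involution property $\Phi_{\chainb}(\Phi_{\chainb}(c))=c$ by tracking how $\psi$, $\tau$ and $\eta$ transform, a step the paper leaves implicit; your tracking is accurate.
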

\begin{proof}
    Combining~\ref{enum-observation-all-we-have-to-do-b-eq}
    and~\ref{enum-observation-all-we-have-to-do-b-chain}
    above
    with cases~\ref{enum-lemma-balloon-b-eq}
    and~\ref{enum-lemma-balloon-b-lt}
    of Lemma~\ref{lemma-2413-balloon-phi-gb}
    gives us that 
    $\Phi_{\chainb}$ is a parity-reversing involution on $\chainb$.
    
    This now gives us that
    $\sum_{c \in \chainb} (-1)^{\order{c}} = 0$.     
    By Lemma~\ref{lemma-2413-balloon-phi-gb}, we have
    $\sum_{c \in \chaing} (-1)^{\order{c}} = 0$, 
    so we must have
    $\mobp{\pi} = \sum_{c \in \chainr} (-1)^{\order{c}}$.
    Since the chains in $\chainr$ are defined by the 
    second-highest element ($\kappa_c$) being in $\redset$, 
    the final part of the observation follows
    by applying Corollary~\ref{corollary-hall-sum-second-highest-set}.
\end{proof}

\section{The \mob function of double 2413-balloons}
\label{section-2413-double-balloons}

We are now able to state and prove our first major result.
\begin{theorem}
    \label{theorem-2413-balloon-beta-is-a-balloon}
    Let $\pi = \ball{2413}{\beta}$,
    where $\beta$ is a 2413-balloon,
    Then $\mobp{\pi} = 2 \mobp{\beta}$.	
\end{theorem}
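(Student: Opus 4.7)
The plan is to apply Observation~\ref{observation-all-we-have-to-do}: partition the chains of $[1,\pi]$ into $\chainr,\chaing,\chainb$, verify that $\Phi_{\chainb}$ is a parity-reversing involution on $\chainb$, and conclude that $\mobp{\pi} = -\sum_{\sigma\in\redset}\mobp{\sigma}$. Since $\beta$ is itself a 2413-balloon, the case analysis in Section~\ref{section-define-2413-balloon} identifies $\beta$ as the unique improper reduction of $\pi$, so $\redset$ consists of the remaining fourteen reductions displayed in Figure~\ref{figure-2413-reductions}: four of length $\order{\beta}+3$, six of length $\order{\beta}+2$, and four of length $\order{\beta}+1$.

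The fact that drives the rest of the argument is that every $\sigma\in\redset$ has core $\beta$: each of the fourteen reductions matches precisely one of the forms in the core algorithm of Section~\ref{section-define-2413-balloon} with $\eta=\beta$, and $\beta$ itself (the only other permutation whose core under the algorithm is $\beta$) is excluded from $\redset$ as the improper reduction. I would use this to verify both hypotheses of Observation~\ref{observation-all-we-have-to-do}. For $c\in\chainb$ with $\eta_c=\tau_c$: if $\tau_c=\beta$ then $\phi_c=\pi$ and hence $\kappa_c=\psi_c$; but the pivot is never a 2413-balloon (otherwise the top 2413-balloon segment of $c$ would extend further down), so $\psi_c$ would be a non-2413-balloon with core $\beta$, forcing it into $\redset$ and contradicting $c\in\chainb$. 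Therefore $\tau_c<\beta$, $\Phi_{\chainb}(c)$ is a chain, and its new second-highest element is either $\kappa_c$ (unchanged) or $\psi_c$ (whose core $\tau_c<\beta$ keeps it outside $\redset$). For $c\in\chainb$ with $\eta_c<\tau_c$, the inserted element $\ball{2413}{\eta_c}$ is a 2413-balloon, and no 2413-balloon belongs to $\redset$ (the only reduction of $\pi$ that is a 2413-balloon is $\beta$, which is improper), so $\ball{2413}{\eta_c}\notin\redset$.

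To compute $\sum_{\sigma\in\redset}\mobp{\sigma}$, I would recognise each $\sigma\in\redset$ as $\beta$ decorated with $j\in\{1,2,3\}$ outer points drawn from $\{1\oplus,\oplus 1,1\ominus,\ominus 1\}$. Because $\beta=\ball{2413}{\gamma}$ satisfies $\beta_1=2$ and $\beta_{\order{\beta}}=\order{\beta}-1$ and therefore has no corner, a short case check confirms that none of the intermediate permutations produced while peeling outer points off $\sigma$ carries a long corner. Iterating Lemma~\ref{lemma-oneplus} then gives $\mobp{\sigma}=(-1)^j\mobp{\beta}$, so
\begin{align*}
\sum_{\sigma\in\redset}\mobp{\sigma} = 4(-1)^3\mobp{\beta}+6(-1)^2\mobp{\beta}+4(-1)^1\mobp{\beta} = -2\mobp{\beta},
\end{align*}
whence $\mobp{\pi}=2\mobp{\beta}$.

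The main obstacle is the verification that $\Phi_{\chainb}$ is a parity-reversing involution: one must rule out both bad events (producing a non-chain by removing $\pi$, or producing an element of $\chainr$ by creating a new second-highest lying in $\redset$) for every $c\in\chainb$, and this hinges on the core-identification property that every element of $\redset$ has core $\beta$ and is itself not a 2413-balloon.
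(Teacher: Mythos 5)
Your proposal is correct and follows essentially the same route as the paper: invoke Observation~\ref{observation-all-we-have-to-do}, check the two conditions on $\Phi_{\chainb}$ by ruling out $\tau_c=\beta$ (via the pivot not being a 2413-balloon) and by noting that the new second-highest element is either unchanged or a 2413-balloon and hence outside $\redset$, and then evaluate $\sum_{\sigma\in\redset}\mobp{\sigma}=-2\mobp{\beta}$ by peeling corner points with Lemma~\ref{lemma-oneplus}. Your ``every element of $\redset$ has core $\beta$'' framing is just a compact restatement of the paper's case analysis, not a different argument.
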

\begin{proof}
    Note that $\beta \not \in \redset$, and further
    that $\order{\beta} > 4$, 
    since $\beta$ is a 2413-balloon.
    
    Using Observation~\ref{observation-all-we-have-to-do},
    we will show that
    $\Phi_{\chainb}$ is a parity-reversing involution on $\chainb$.
    Once we have shown that we have parity-reversing involutions,
    we will then show how to express the Hall sum of $\chainr$
    in terms of $\mobp{\beta}$.
    
    %
    %
    \begin{subproof}[Proof that $\Phi_{\chainb}$ is a parity-reversing involution on $\chainb$.]
        Let $c$ be a chain in $\chainb$.
        
        %
        First, assume that $\eta_c = \tau_c$.
        If $\tau_c = \beta$, then either 
        $\psi_c$ is a proper reduction of $\pi$,
        or $\psi_c = \beta$.
        In the first case, $c \in \chainr$,
        and in the second case 
        $\psi_c$ is a 2413-balloon, 
        and these are both contradictions.
        Thus
        we must have $\tau_c < \beta$,
        and so there is at least one permutation
        in $c$ greater than $\phi_c$.  
        It follows that $\cprime$ is a chain.
        We now show that $\cprime \not\in \chainr$.
        Assume, to the contrary, that $\cprime \in \chainr$
        which implies that $\psi_c$ is a proper reduction of $\pi$.
        But now we have $\eta_c = \beta$, which is a contradiction,
        so $\psi_c$ is not a proper reduction of $\pi$,
        therefore $\cprime \not \in \chainr$.
        
        %
        Now assume that $\eta_c < \tau_c$.
        Let $\cprime = \Phi_{\chainb}(c) = c \cup \{ \ball{2413}{\eta_c} \}$.  
        We know by Lemma~\ref{lemma-2413-balloon-phi-gb}
        that this is a chain.
        Either $\kappa_c = \kappa_{\cprime}$,
        or $\kappa_{\cprime}$ 
        is a 2413-balloon.
        If $\kappa_c = \kappa_{\cprime}$,
        then $\cprime \not \in \chainr$.
        If $\kappa_{\cprime}$ is a 2413-balloon,
        then $\kappa_{\cprime} \not \in \redset$,
        so $\cprime \not \in \chainr$.
        Thus we must have $\cprime \not\in \chainr$.
        
        So now we have that 
        if $c \in \chainb$ 
        and $\eta_c = \tau_c$, 
        then $\Phi_{\chainb}(c)$ is a chain;
        and that for any $c \in \chainb$,
        $\Phi_{\chainb}(c) \in \chainb$.
        It follows that
        $\Phi_{\chainb}$ is a parity-reversing involution
        on $\chainb$.        
    \end{subproof}

    We now have that
    $\Phi_{\chaing}$ and
    $\Phi_{\chainb}$ 
    are parity-reversing involutions on
    $\chaing$ and
    $\chainb$ respectively.
    It follows from 
    Observation~\ref{observation-all-we-have-to-do}
    that
    $
    \mobp{\pi} = - \sum_{\sigma \in \redset} \mobp{\sigma}.
    $
    We now show how to express $\mobp{\sigma}$, where $\sigma \in \redset$,
    in terms of $\mobp{\beta}$.
    
    We start by noting that since $\beta$ is a 2413-balloon,
    then $\beta$ has no corners.
    Now,
    take the case where $\sigma = \ball{\ex{2}413}{\beta}$,
    which is the first permutation in 
    Figure~\ref{figure-2413-reductions}.
    Note that we can write 
    $\sigma = \oneminus ((\beta \minusone) \plusone)$.
    Applying Lemma~\ref{lemma-oneplus} to the 
    outermost three points in $\sigma$ 
    (those from the $\ex{2}413$), we find that
    $\mobp{\sigma} = -\mobp{\beta}$.
    The other cases are similar, and 
    this gives us:\footnote{This table is slightly redundant,
        as the entries are determined by the parity of the ``red'' points.
        We include it as later results have similar tables where some
        values of $\mobp{\sigma}$ are zero, and this gives a consistent presentation.}
    \begin{center}
        $
        \begin{array}{ccccc}
        \begin{array}{lr}
        \sigma & \mobp{\sigma} \\
        \midrule
        \ball{\ex{2}413}{\beta} & - \mobp{\beta} \\
        \ball{2\ex{4}13}{\beta} & - \mobp{\beta} \\
        \ball{24\ex{1}3}{\beta} & - \mobp{\beta} \\
        \ball{241\ex{3}}{\beta} & - \mobp{\beta} \\
        \phantom{x} & \phantom{x} \\		    
        \phantom{x} & \phantom{x} \\		    
        \end{array} 
        & \phantom{xxx} &
        \begin{array}{lr}
        \sigma & \mobp{\sigma} \\
        \midrule
        \ball{\ex{2}\ex{4}13}{\beta} & \mobp{\beta} \\
        \ball{\ex{2}4\ex{1}3}{\beta} & \mobp{\beta} \\
        \ball{\ex{2}41\ex{3}}{\beta} & \mobp{\beta} \\
        \ball{2\ex{4}\ex{1}3}{\beta} & \mobp{\beta} \\
        \ball{2\ex{4}1\ex{3}}{\beta} & \mobp{\beta} \\
        \ball{24\ex{1}\ex{3}}{\beta} & \mobp{\beta} \\
        \end{array} 
        & \phantom{xxx} &
        \begin{array}{lr}
        \sigma & \mobp{\sigma} \\
        \midrule
        \ball{2\ex{4}\ex{1}\ex{3}}{\beta} & - \mobp{\beta} \\
        \ball{\ex{2}4\ex{1}\ex{3}}{\beta} & - \mobp{\beta} \\
        \ball{\ex{2}\ex{4}1\ex{3}}{\beta} & - \mobp{\beta} \\
        \ball{\ex{2}\ex{4}\ex{1}3}{\beta} & - \mobp{\beta} \\
        \phantom{x} & \phantom{x} \\		    
        \phantom{x} & \phantom{x} \\		    
        \end{array} \\		  		    
        \end{array}
        $
    \end{center}
    It is now easy to see that 
    \[
    \sum_{\sigma \in \redset} \mobp{\sigma} = - 2 \mobp{\beta}
    \]
    and the result follows directly.	
\end{proof}

\section{The growth of the \mob function}
\label{section-growth-rate-of-mu}

We define
$\mobmax(n) = \max \{ \order{\mobp{\pi}} : \order{\pi} = n \}$.
Previous work in~\cite{Jelinek2020} and~\cite{Smith2013}
has shown that the growth of $\mobmax(n)$ is at least polynomial.  
We will show that the growth is at least exponential.
We have
\begin{theorem}
    \label{theorem-growth-of-mobius-function}
    For all $n$, 
    $\mobmax(n) \geq 2^{\lfloor n/4 \rfloor - 1 }$.
\end{theorem}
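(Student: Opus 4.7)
My plan is to build an iterated sequence of 2413-balloons whose principal Möbius values double at every step, and then to cover the remaining lengths by padding with extremal corner points so that the bound $2^{\lfloor n/4\rfloor - 1}$ holds for every $n$.

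I would start with $\pi_0 = 25314 = \ball{2413}{1}$, which by definition is a 2413-balloon of length $5$. A short direct calculation over the (small) interval $[1, 25314]$, using Lemmas~\ref{lemma-oneplus-oneplus} and~\ref{lemma-oneplus} to evaluate the Möbius function on the shorter permutations contained in $25314$, will give $|\mobp{25314}| \geq 2$. I would then recursively set $\pi_k = \ball{2413}{\pi_{k-1}}$ for $k \geq 1$. Since each $\pi_{k-1}$ is itself a 2413-balloon (by construction for $k \geq 1$, and by choice for $k = 0$), Theorem~\ref{theorem-2413-balloon-beta-is-a-balloon} applies at every step and yields $|\mobp{\pi_k}| = 2^{k}|\mobp{\pi_0}| \geq 2^{k+1}$, with $|\pi_k| = 4k+5$.

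To handle an arbitrary $n \geq 5$, I would write $n = 4k + 5 + j$ with $j \in \{0,1,2,3\}$ and, when $j \geq 1$, extend $\pi_k$ to length $n$ by $j$ successive applications of Lemma~\ref{lemma-oneplus}. At each such step I would prepend or append a single extremal point, choosing from among $\oneplus, \oneminus, \plusone, \minusone$ so as not to create a long corner; since $\pi_k$ begins with $2,\,|\pi_k|$ and ends with $1,\,|\pi_k|-1$, a quick case check shows that an admissible choice always exists at each intermediate stage. Each application flips the sign of $\mobp$ but preserves its absolute value, so the resulting permutation has length $n$ and satisfies $|\mobp| \geq 2^{k+1}$. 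Since $\lfloor n/4 \rfloor \leq k+2$ when $j \leq 3$, this is at least $2^{\lfloor n/4 \rfloor - 1}$, as required. The residual cases $n \leq 4$ are immediate because the bound is then at most $1$ and is witnessed, for instance, by $|\mobp{1}| = 1$ and $|\mobp{2413}| = 3$.

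The principal obstacle is really only the base case $|\mobp{25314}| \geq 2$: it is elementary and could alternatively be obtained as a special case of the formula developed later in Section~\ref{section-2413-balloons}, but it is the single point in the argument where ad hoc computation is unavoidable, and the whole exponential growth is anchored to it. The only other point requiring care is verifying that the padding in the last paragraph can be performed consistently for $j = 1, 2, 3$ without a long corner appearing at any intermediate step; this is routine but must be done carefully since the permutation being padded changes its structure at the corners after each extension.
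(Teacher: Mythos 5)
Your proposal is correct, and its engine is the same as the paper's: iterate the $2413$-balloon and double $\order{\mobp{\cdot}}$ at each step via Theorem~\ref{theorem-2413-balloon-beta-is-a-balloon}. Where you diverge is in covering the lengths $n \not\equiv 1 \pmod 4$. The paper runs four parallel balloon towers, seeded by $25314$, $\ball{2413}{12}$, $\ball{2413}{132}$ and $\ball{2413}{2413}$, and therefore needs four base-case evaluations (including the length-$8$ value $\mobp{28463517}=-6$); every tower member of length $>8$ is a double balloon, so the doubling theorem applies uniformly. You instead run a single tower from $25314$ and pad by at most three extremal points using Lemma~\ref{lemma-oneplus}, which preserves $\order{\mobp{\cdot}}$. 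This trades three small Möbius computations for the long-corner case check, and that check does go through: since $\pi_k$ begins $2,\order{\pi_k}$ and ends $1,\order{\pi_k}-1$, the sequence $1\oplus\pi_k$, then $(1\oplus\pi_k)\oplus 1$, then $1\ominus(1\oplus\pi_k\oplus 1)$ avoids a long corner at every stage (each of the four forbidden forms fails because the relevant neighbouring entry is never the required second-extremal value). Your arithmetic also closes: $\order{\mobp{\pi_k}}\geq 2^{k+1}$ while $\lfloor n/4\rfloor\leq k+2$ for $n=4k+5+j$, $j\leq 3$, and the cases $n\leq 4$ are trivial since the bound there is at most $1$. One presentational caveat: you should actually record the padding verification (or at least one admissible sequence of extensions, as above), since it is the only place your argument could silently fail, and you should compute $\mobp{25314}=4$ explicitly rather than merely asserting $\order{\mobp{25314}}\geq 2$.
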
    
\begin{proof}
    We start by a defining a function
    to construct a permutation of length $n$.
    \[
    \pi^{(n)} =
    \begin{cases}
    1                   & \text{If $n = 1$} \\
    12                  & \text{If $n = 2$} \\
    132                 & \text{If $n = 3$} \\
    2413                & \text{If $n = 4$} \\
    \ball{2413}{\pi^{(n-4)}} & \text{Otherwise}
    \end{cases}
    \]
    Note that for $n > 8$, $\pi^{(n)}$ is a double 2413-balloon.
    It is simple to calculate $\mobp{\pi^{(n)}}$ for $n=1, \ldots, 8$,
    and these values are given below.
    
    \begin{center}
    \begin{tabular}{lcl}
    $\mobp{\pi^{(1)}} = \mobp{1} = 1$,  &
    \phantom{xxxxx} & 
    $\mobp{\pi^{(5)}} = \mobp{25314} =  4$, \\
    $\mobp{\pi^{(2)}} = \mobp{12} = -1$, && 
    $\mobp{\pi^{(6)}} = \mobp{263415} = -1$, \\
    $\mobp{\pi^{(3)}} = \mobp{132} = 1$, &&
    $\mobp{\pi^{(7)}} = \mobp{2735416} = 1$, \\
    $\mobp{\pi^{(4)}} = \mobp{2413} = -3$, && 
    $\mobp{\pi^{(8)}} = \mobp{28463517}  = -6$.
    \end{tabular}    
    \end{center}
    These values match
    Theorem~\ref{theorem-growth-of-mobius-function},
    and so this is true for $n \leq 8$.
    For $n > 8$, 
    $\mobp{\pi^{(n)}} = 2 \mobp{\pi^{(n-4)}}$ 
    by Theorem~\ref{theorem-2413-balloon-beta-is-a-balloon},
    and the result follows immediately.    
\end{proof}
\begin{remark}
    \label{remark-simples-in-2413-balloons}
    It is easy to see that, with the definitions above,
    the only simple permutations that can be contained
    in $\pi^{(n)}$ are
    $1$, 
    $12$, 
    $21$,
    $2413$, and
    $25314$.
    This answers
    Problem 4.4 in~\cite{Jelinek2020},
    which asks
    whether $\mobp{\pi}$ is bounded on a hereditary 
    class which contains only finitely many simple permutations,
    as, by Theorem~\ref{theorem-growth-of-mobius-function},
    we have unbounded growth, but only finitely many simple 
    permutations.
\end{remark}

If we repeat the ballooning process, as we do in $\pi^{(n)}$,
then the permutation plot is rather striking.  
We illustrate this in 
Figure~\ref{figure-multiple-2413-ballons},
which shows $\pi^{(21)}$.
\begin{figure}[ht!]
    \begin{center}
        \begin{tikzpicture}[scale=0.25]
        \sgrid{21}
        \point{01}{02}   
        \point{03}{04}   
        \point{05}{06}   
        \point{07}{08}   
        \point{09}{10} 
        \point{02}{21} 
        \point{04}{19} 
        \point{06}{17} 
        \point{08}{15} 
        \point{10}{13} 
        \point{20}{01} 
        \point{18}{03} 
        \point{16}{05} 
        \point{14}{07} 
        \point{12}{09} 
        \point{21}{20} 
        \point{19}{18} 
        \point{17}{16} 
        \point{15}{14} 
        \point{13}{12} 
        \point{11}{11}
        \end{tikzpicture}
        \caption{A permutation plot showing $\pi^{(21)}$.}
        \label{figure-multiple-2413-ballons}
    \end{center}
\end{figure}

\section{The \mob function of 2413-balloons}
\label{section-2413-balloons}

Theorem~\ref{theorem-2413-balloon-beta-is-a-balloon} 
gives us an expression for the value of the \mob function
$\mobp{\pi}$ when $\pi$ is a double 2413-balloon.
We expand on this to find an expression for 
the \mob function $\mobp{\pi}$ when
$\pi$ is any 2413-balloon.

We start with a Lemma
that handles the case where
$\beta$ is not a 2413-balloon,
and has more than four points.
The structure of our proof is similar to that of 
Theorem~\ref{theorem-2413-balloon-beta-is-a-balloon},
but we present a complete
argument to aid readability.

We will show
\begin{lemma}
    \label{lemma-2413-balloon-beta-not-a-balloon}
    Let $\pi = \ball{2413}{\beta}$,
    where $\beta$ is not a 2413-balloon,
    and $\order{\beta} > 4$.
    Then $\mobp{\pi} = \mobp{\beta}$.	
\end{lemma}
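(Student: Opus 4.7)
The plan is to mirror the proof of Theorem~\ref{theorem-2413-balloon-beta-is-a-balloon}. First I would partition the chains of the poset interval $[1, \pi]$ into $\chainr$, $\chaing$, and $\chainb$, and then invoke Lemma~\ref{lemma-2413-balloon-phi-gb} together with Observation~\ref{observation-all-we-have-to-do}. This reduces the lemma to two tasks: verify that $\Phi_{\chainb}$ is a parity-reversing involution on $\chainb$, and then establish that $\sum_{\sigma \in \redset} \mobp{\sigma} = -\mobp{\beta}$.

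For the involution, I would replay the argument of the theorem. The only new issue is the case $c \in \chainb$ with $\eta_c = \tau_c = \beta$: the theorem eliminated this via ``$\psi_c$ would be a 2413-balloon, contradiction'', which is no longer available. Instead, since $\tau_c = \beta$ forces $\phi_c = \pi$ and $\kappa_c = \psi_c$, and $c \in \chainb$ forces $\psi_c \not\in \redset$, the pivot $\psi_c$ must be one of the (at most seven) improper reductions of $\pi$ listed in the case analysis before Figure~\ref{figure-2413-reductions}. A short enumeration shows that the core algorithm applied to each such improper reduction peels off the $\oplus 1$ or $\ominus 1$ layer that corresponds to a corner of $\beta$, producing a core $\eta_c$ strictly smaller than $\beta$, contradicting $\eta_c = \beta$. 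All other involution conditions carry over verbatim from the theorem.

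For the sum, I would split into cases on the number of corners of $\beta$, following the trichotomy before Figure~\ref{figure-2413-reductions}. When $\beta$ has no corners, every entry of the figure is a proper reduction; each is a $k$-fold wrapping of $\beta$ by $\oplus 1/\ominus 1$ for some $0 \leq k \leq 3$, and since $\beta$ has no corners no intermediate permutation acquires a long corner, so iterated application of Lemma~\ref{lemma-oneplus} gives $\mobp{\sigma} = (-1)^k \mobp{\beta}$. Summing the 15 entries yields $(1 - 4 + 6 - 4)\mobp{\beta} = -\mobp{\beta}$. When $\beta$ has one or two corners, the improper reductions drop out of $\redset$; simultaneously, certain proper reductions vanish because the stripping via Lemma~\ref{lemma-oneplus} passes through an intermediate of the form $1 \oplus \beta$, $\beta \oplus 1$, $1 \ominus \beta$, or $\beta \ominus 1$ which inherits a long corner from $\beta$'s own corner, to which Lemma~\ref{lemma-oneplus-oneplus} applies. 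I would present the resulting values as a table analogous to the one in the theorem, with appropriate entries zeroed; in each of the finitely many corner configurations the table totals $-\mobp{\beta}$.

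The main obstacle will be the bookkeeping in the corner cases: for each admissible corner pattern of $\beta$, I must identify exactly which proper reductions acquire a long corner during stripping, and verify that the values ``lost'' by removing improper reductions and the values ``lost'' by zeroing those proper reductions each sum to zero, so that the invariant $\sum_{\sigma \in \redset} \mobp{\sigma} = -\mobp{\beta}$ is maintained. The reverse-complement symmetry of $2413$ should halve the number of genuinely distinct cases to check.
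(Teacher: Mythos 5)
Your proposal follows the paper's proof almost step for step: partition chains into $\chainr$, $\chaing$, $\chainb$; invoke Lemma~\ref{lemma-2413-balloon-phi-gb} and Observation~\ref{observation-all-we-have-to-do}; show $\Phi_{\chainb}$ is a parity-reversing involution; then compute $\sum_{\sigma\in\redset}\mobp{\sigma}$. Your treatment of the delicate case $\eta_c=\tau_c=\beta$ — observe that $\psi_c$ must then be an improper reduction, and the core algorithm peels off the layer corresponding to a corner of $\beta$, forcing $\eta_c<\beta$, a contradiction — is precisely the paper's argument.

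Two small corrections are worth flagging. First, you should dispose of monotonic $\beta$ at the very start via Corollary~\ref{corollary-monotonic-interval} ($\mobp{\beta}=0=\mobp{\pi}$, so the lemma is trivially true); the overbar representation and the core-algorithm bookkeeping are only well behaved when $\beta$ is not monotonic, and the paper makes this reduction explicitly. Second, your forecast that the contribution ``lost'' by dropping the improper reductions and the contribution ``lost'' by zeroing the long-corner proper reductions \emph{each} sum to zero holds in the one-corner case but fails for two corners: there the seven improper reductions carry a hypothetical total of $-\mobp{\beta}$ and the five zeroed proper reductions a hypothetical $+\mobp{\beta}$, so it is their \emph{combined} cancellation, not a term-by-term one, that preserves $\sum_{\sigma\in\redset}\mobp{\sigma}=-\mobp{\beta}$. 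The tabulation you propose would surface this, so it is a flaw in the forecast rather than in the plan, but the invariant you are protecting is the total, not each loss separately.
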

\begin{proof}
    First note that if $\beta$ is monotonic, then
    by Corollary~\ref{corollary-monotonic-interval}
    we have $\mobp{\beta} = 0 = \mobp{\pi}$.
    For the remainder of this proof, we assume that
    $\beta$ is not monotonic.
    
    If $\beta$ has one corner, then
    without loss of generality, we can assume by symmetry that
    $\beta = \oneplus \gamma$.
    Similarly, if $\beta$ has two corners,
    then we can assume that $\beta = \oneplus \gamma \plusone$.
    
    As before, we will use
    Observation~\ref{observation-all-we-have-to-do}.
    We will show that
    $\Phi_{\chainb}$ is a parity-reversing involution on $\chainb$.
    Once we have shown that we have parity-reversing involutions,
    we will then show how to express the Hall sum of $\chainr$
    in terms of $\mobp{\beta}$.    
    
    The proper reductions of $\pi$ depend on the number of corners of $\beta$.
    Below we list the improper reductions of $\pi$ for each case.
    
    \begin{center}    
        \begin{tabular}{lr}
            \toprule
            Corners in $\beta$ & Improper reductions of $\pi$  \\
            \midrule
            No corners & 
            None. \\
            One corner ($\beta = 1 \oplus \gamma$) &
            $\ball{\ex{2}\ex{4}13}{\beta}$,
            $\ball{\ex{2}\ex{4}1\ex{3}}{\beta}$,
            $\ball{\ex{2}\ex{4}\ex{1}3}{\beta}$,
            and
            $\beta$. \\
            Two corners ($\beta = 1 \oplus \gamma  \oplus 1$) &
            $\ball{\ex{2}\ex{4}13}{\beta}$,
            $\ball{24\ex{1}\ex{3}}{\beta}$,
            \\ & 
            $\ball{2\ex{4}\ex{1}\ex{3}}{\beta}$,
            $\ball{\ex{2}4\ex{1}\ex{3}}{\beta}$,
            $\ball{\ex{2}\ex{4}1\ex{3}}{\beta}$,
            $\ball{\ex{2}\ex{4}\ex{1}3}{\beta}$, 
            \\ & and    
            $\beta$. \\
            \bottomrule
        \end{tabular}        
    \end{center}
    
    %
    %
    \begin{subproof}[Proof that $\Phi_{\chainb}$ is a parity-reversing involution on $\chainb$.]
        Let $c$ be a chain in $\chainb$.
        
        First, assume that $\eta_c = \tau_c$, so
        $\cprime = c \setminus \{ \ball{2413}{\tau_c} \}$.
        We start by showing that $\cprime$ is a valid chain.
        Assume otherwise, which implies $\tau_c = \beta$.
        
        If $\beta$ has no corners, 
        then
        $\psi_c \in \redset$, so $c \in \chainr$,
        which is a contradiction.
        
        If $\beta$ has one corner,
        then
        either $\psi_c \in \redset$, 
        which is a contradiction,
        or $\psi_c \not \in \redset$.
        In the latter case,
        assume, without loss of generality,
        that $\beta = 1 \oplus \gamma$.
        Then 
        $\ball{\ex{2}\ex{4}13}{\beta}       = \ball{2\ex{4}13}{\gamma}$,
        $\ball{\ex{2}\ex{4}1\ex{3}}{\beta}  = \ball{2\ex{4}1\ex{3}}{\gamma}$,
        $\ball{\ex{2}\ex{4}\ex{1}3}{\beta}  = \ball{2\ex{4}\ex{1}3}{\gamma}$,
        and
        $\beta                              = \ball{2\ex{4}\ex{1}\ex{3}}{\gamma}$.
        Thus in all cases where 
        $\psi_c \not \in \redset$,
        we have that $\eta_c$ is not minimal, 
        which is a contradiction.
        
        Finally, if $\beta$ has two corners,
        then
        either $\psi_c \in \redset$,
        which is a contradiction,
        or $\psi_c \not \in \redset$.
        The latter case implies that
        $\psi_c = \beta$,
        and then we have that
        either 
        $\psi_c = 1 \oplus \gamma \oplus 1 = \ball{2\ex{4}\ex{1}3}{\gamma}$
        or
        $\psi_c = 1 \ominus \gamma \ominus 1 = \ball{\ex{2}41\ex{3}}{\gamma}$,
        so
        $\eta_c$ is not minimal,
        which is a contradiction.
        
        Thus we have that $\cprime$ must be a chain,
        and, moreover, $\tau_c \neq \beta$.
        
        We now show that $\cprime \not\in \chainr$.
        Assume, to the contrary, that $\cprime \in \chainr$
        which implies that $\psi_c$ is a proper reduction of $\pi$.
        But now we have $\eta_c = \beta$, 
        but this would give $\tau_c = \beta$,
        which is a contradiction,
        therefore $\cprime \not \in \chainr$.
                
        Now assume that $\eta_c < \tau_c$.
        Let $\cprime = \Phi_{\chainb}(c) = c \cup \{ \ball{2413}{\eta_c} \}$,
        and we know from Lemma~\ref{lemma-2413-balloon-phi-gb}
        that $\cprime$ is a chain.
        Now either $\kappa_c = \kappa_{\cprime}$,
        or $\kappa_{\cprime}=\ball{2413}{\eta_c}$ is a 2413-balloon.
        In either case we have $\cprime \not \in \chainr$.
        
        So if $c \in \chainb$, then $\Phi_{\chainb}(c)$
        is a chain in $\chainb$,
        and thus $\Phi_{\chainb}$ is a parity-reversing involution.
    \end{subproof}    
    
    We have shown that
    $\Phi_{\chaing}$ and
    $\Phi_{\chainb}$ 
    are parity-reversing involutions on
    $\chaing$ and
    $\chainb$ respectively.
    It follows from 
    Observation~\ref{observation-all-we-have-to-do}
    that
    $
    \mobp{\pi} = - \sum_{\sigma \in \redset} \mobp{\sigma}.
    $
    We now show how to express $\mobp{\sigma}$, where $\sigma \in \redset$
    in terms of $\mobp{\beta}$.
    We use a similar mechanism to that used
    in Theorem~\ref{theorem-2413-balloon-beta-is-a-balloon}.
    There are some additional considerations
    where $\beta$ has one or two corners.
    
    As an example, 
    take the case where $\sigma = \ball{2\ex{4}13}{\beta}$,
    and $\beta$ has one corner, and so, by our assumption,  can be
    written as $\oneplus \gamma$.
    We can write 
    $\sigma = ((\oneplus \beta) \minusone) \plusone$,
    and expanding $\beta$ we have
    $\sigma = ((\oneplus \oneplus \gamma) \minusone) \plusone$,    
    Applying Lemma~\ref{lemma-oneplus} to the 
    outermost two points in $\sigma$, 
    we find that
    $\mobp{\sigma} = \mobp{\oneplus \oneplus \gamma}$,
    and by 
    Lemma~\ref{lemma-oneplus-oneplus} we now have
    $\mobp{\sigma} = 0$.
    Because of this, our analysis depends on the number of corners of $\beta$,
    and we consider each case separately below.
    
    If $\beta$ has no corners, then we have
    \begin{center}
        $
        \begin{array}{ccccc}
        \begin{array}{lr}
        \sigma & \mobp{\sigma} \\
        \midrule
        \ball{\ex{2}413}{\beta} & - \mobp{\beta} \\
        \ball{2\ex{4}13}{\beta} & - \mobp{\beta} \\
        \ball{24\ex{1}3}{\beta} & - \mobp{\beta} \\
        \ball{241\ex{3}}{\beta} & - \mobp{\beta} \\
        \phantom{x} & \phantom{x} \\		    
        \phantom{x} & \phantom{x} \\		    
        \end{array} 
        & \phantom{xxx} &
        \begin{array}{lr}
        \sigma & \mobp{\sigma} \\
        \midrule
        \ball{\ex{2}\ex{4}13}{\beta} & \mobp{\beta} \\
        \ball{\ex{2}4\ex{1}3}{\beta} & \mobp{\beta} \\
        \ball{\ex{2}41\ex{3}}{\beta} & \mobp{\beta} \\
        \ball{2\ex{4}\ex{1}3}{\beta} & \mobp{\beta} \\
        \ball{2\ex{4}1\ex{3}}{\beta} & \mobp{\beta} \\
        \ball{24\ex{1}\ex{3}}{\beta} & \mobp{\beta} \\
        \end{array} 
        & \phantom{xxx} &
        \begin{array}{lr}
        \sigma & \mobp{\sigma} \\
        \midrule
        \ball{2\ex{4}\ex{1}\ex{3}}{\beta} & - \mobp{\beta} \\
        \ball{\ex{2}4\ex{1}\ex{3}}{\beta} & - \mobp{\beta} \\
        \ball{\ex{2}\ex{4}1\ex{3}}{\beta} & - \mobp{\beta} \\
        \ball{\ex{2}\ex{4}\ex{1}3}{\beta} & - \mobp{\beta} \\
        \phantom{x} & \phantom{x} \\		    
        \beta & \mobp{\beta} \\		    
        \end{array} \\		  		    
        \end{array}
        $
    \end{center}
    
    If $\beta$ has one corner, under our assumption
    that $\beta$ = $\oneplus \gamma$, we have
    \begin{center}
        $
        \begin{array}{ccccc}
        \begin{array}{lr}
        \sigma & \mobp{\sigma} \\
        \midrule
        \ball{\ex{2}413}{\beta} & - \mobp{\beta} \\
        \ball{2\ex{4}13}{\beta} & 0 \\
        \ball{24\ex{1}3}{\beta} & - \mobp{\beta} \\
        \ball{241\ex{3}}{\beta} & - \mobp{\beta} \\
        \phantom{x} & \phantom{x} \\		    
        \end{array} 
        & \phantom{xxx} &
        \begin{array}{lr}
        \sigma & \mobp{\sigma} \\
        \midrule
        \ball{\ex{2}4\ex{1}3}{\beta} & \mobp{\beta} \\
        \ball{\ex{2}41\ex{3}}{\beta} & \mobp{\beta} \\
        \ball{2\ex{4}\ex{1}3}{\beta} & 0 \\
        \ball{2\ex{4}1\ex{3}}{\beta} & 0 \\
        \ball{24\ex{1}\ex{3}}{\beta} & \mobp{\beta} \\
        \end{array} 
        & \phantom{xxx} &
        \begin{array}{lr}
        \sigma & \mobp{\sigma} \\
        \midrule
        \ball{2\ex{4}\ex{1}\ex{3}}{\beta} & 0 \\
        \ball{\ex{2}4\ex{1}\ex{3}}{\beta} & - \mobp{\beta} \\
        \phantom{x} & \phantom{x} \\ 
        \phantom{x} & \phantom{x} \\
        \phantom{x} & \phantom{x} \\		    
        \end{array} \\		  		    
        \end{array}
        $
    \end{center}
    
    Finally, if $\beta$ has two corners, under
    our assumption that $\beta = \oneplus \gamma \plusone$,
    we have
    \begin{center}
        $
        \begin{array}{ccc}
        \begin{array}{lr}
        \sigma & \mobp{\sigma} \\
        \midrule
        \ball{\ex{2}413}{\beta} & - \mobp{\beta} \\
        \ball{2\ex{4}13}{\beta} & 0 \\
        \ball{24\ex{1}3}{\beta} & 0 \\
        \ball{241\ex{3}}{\beta} & - \mobp{\beta} \\
        \end{array} 
        & \phantom{xxx} &
        \begin{array}{lr}
        \sigma & \mobp{\sigma} \\
        \midrule
        \ball{\ex{2}4\ex{1}3}{\beta} & 0 \\
        \ball{\ex{2}41\ex{3}}{\beta} & \mobp{\beta} \\
        \ball{2\ex{4}\ex{1}3}{\beta} & 0 \\
        \ball{2\ex{4}1\ex{3}}{\beta} & 0 \\
        \end{array} 
        \end{array}
        $
    \end{center}
    In all three cases we have    
    \[
    \sum_{\sigma \in \redset} \mobp{\sigma} = - \mobp{\beta}
    \]
    and the result follows directly.
\end{proof}	

We are now in a position to state and prove the main Theorem
for this section.
\begin{theorem}
    \label{theorem-2413-balloons}
    Let $\pi = \ball{2413}{\beta}$.  Then
    \begin{align*}
    \mobp{\pi} = 
    \begin{cases}
    4 & \text{If $\beta = 1$} \\
    -6 & \text{If $\beta = 2413$} \\
    2 \mobp{\beta} & \text{If $\beta$ is a 2413-balloon} \\
    \mobp{\beta}  & \text{Otherwise}.	
    \end{cases}
    \end{align*}
\end{theorem}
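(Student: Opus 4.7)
The plan is to combine the results already established with direct verification for a finite set of boundary cases.

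Two of the four branches follow immediately from existing work. If $\beta$ is a 2413-balloon, then $\order{\beta} \geq 5$ automatically, and Theorem~\ref{theorem-2413-balloon-beta-is-a-balloon} gives $\mobp{\pi} = 2\mobp{\beta}$. If $\beta$ is not a 2413-balloon and $\order{\beta} > 4$, then Lemma~\ref{lemma-2413-balloon-beta-not-a-balloon} gives $\mobp{\pi} = \mobp{\beta}$. Together these cover every $\beta$ with $\order{\beta} \geq 5$.

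What remains is the case $\order{\beta} \leq 4$, a finite list of 33 permutations, which I would handle by direct verification. The two singled-out cases $\beta = 1$ and $\beta = 2413$ need their special values established: $\mobp{25314} = 4$ and $\mobp{28463517} = -6$. The first can be computed by running the recursive definition of $\mu$ over the (relatively small) interval $[1, 25314]$, while the second is a double 2413-balloon whose value can either be found directly or by adapting the proof of Theorem~\ref{theorem-2413-balloon-beta-is-a-balloon} to the boundary case $\order{\beta} = 4$. For the remaining small $\beta$ (length 2, 3, or length 4 other than 2413), the claim $\mobp{\pi} = \mobp{\beta}$ must be verified. Whenever $\beta$ contains a monotonic run of length at least 3, both $\mobp{\beta}$ and $\mobp{\pi}$ vanish by Corollary~\ref{corollary-monotonic-interval}, which disposes of a large fraction of the cases. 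The remaining non-trivial small cases can be checked individually, using Lemmas~\ref{lemma-oneplus-oneplus} and~\ref{lemma-oneplus} to simplify where possible and explicit chain enumeration otherwise.

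The main obstacle is this small-$\beta$ range, because Lemma~\ref{lemma-2413-balloon-beta-not-a-balloon} requires $\order{\beta} > 4$---this restriction ultimately comes from Lemma~\ref{lemma-2413-balloon-phi-gb}, which uses $\order{\beta} > 4$ to decouple the pivot-based classification of chains from membership in $\redset$. Attempting to push the involution argument down to $\order{\beta} \leq 4$ would require delicate bookkeeping of how small pivots (notably $2413$ itself) can coincide with proper reductions of $\pi$. Direct computation is therefore the cleaner route for these finitely many boundary cases, with the cost being that the proof becomes a case analysis rather than a single conceptual argument.
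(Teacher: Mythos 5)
Your proposal matches the paper's proof in all essentials: invoke Theorem~\ref{theorem-2413-balloon-beta-is-a-balloon} when $\beta$ is a 2413-balloon (necessarily $\order{\beta}\geq 5$), invoke Lemma~\ref{lemma-2413-balloon-beta-not-a-balloon} when $\beta$ is not a 2413-balloon and $\order{\beta}>4$, and dispose of the finitely many $\beta$ with $\order{\beta}\leq 4$ by direct verification, which simultaneously accounts for the two exceptional values at $\beta=1$ and $\beta=2413$. The paper carries out the small-case check via a table over symmetry classes of $\beta$ with $\order{\beta}\leq 4$, which is exactly the calculation you propose.
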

\begin{proof}
    The value of $\mobp{\ball{2413}{\beta}}$
    for the symmetry classes of $\beta$ 
    with $\order{\beta} \leq 4$ are shown below.
    \begin{center}
        $
        \begin{array}{ccc}
        \begin{array}{lrr}
        \beta & \mobp{\beta} &\mobp{\ball{2413}{\beta}} \\
        \midrule
        1    &  1 &  4 \\
        12   & -1 & -1 \\
        123  &  0 &  0 \\
        132  &  1 &  1 \\
        1234 &  0 &  0 \\
        1243 &  0 &  0 \\
        \end{array} 
        & \phantom{xxx} &
        \begin{array}{lrr}
        \beta & \mobp{\beta} &\mobp{\ball{2413}{\beta}} \\
        \midrule
        1324 & -1 & -1 \\
        1342 & -1 & -1 \\
        1432 &  0 &  0 \\
        2143 & -1 & -1 \\
        2413 & -3 & -6 \\
        \phantom{x} & \phantom{x} \\		    
        \end{array} \\		  		    
        \end{array}
        $
    \end{center}        
    It is easy to see that
    these values meet Theorem~\ref{theorem-2413-balloons}.
    We now combine
    Theorem~\ref{theorem-2413-balloon-beta-is-a-balloon} and
    Lemma~\ref{lemma-2413-balloon-beta-not-a-balloon}
    to complete the proof.
\end{proof}

\section{Concluding remarks}
\label{section-concluding-remarks}

\subsection{Generalising the balloon operator}

Given two permutations $\alpha$ and $\beta$,
with lengths $a$ and $b$ respectively,
and two integers $i,j$ which satisfy
$0 \leq i,j \leq a$, the
\emph{$i,j$-balloon} of $\beta$ by $\alpha$,
written as 
$\ballgen{i,j}{\alpha}{\beta}$,
is the 
permutation formed by inserting
the permutation $\beta$ into $\alpha$
between the $i$-th and $i+1$-th columns of $\alpha$,
and 
between the $j$-th and $j+1$-th rows of $\alpha$.
The integers $i$ and $j$ are, collectively, the
\emph{indexes} of the balloon.

Formally, we have
\begin{align*}
(\ballgen{i,j}{\alpha}{\beta})_x
& =
\begin{cases}
\alpha_x 
& \text{if $x \leq i$ and $\alpha_x \leq j $}\\
\alpha_x + \order{\beta} 
& \text{if $x \leq i$ and $\alpha_x > j $}\\
\beta_{x-i} + j 
& \text{if $x > i $ and $x \leq i+\order{\beta}$ }\\
\alpha_{x-\order{\beta}} 
& \text{if $x > i+\order{\beta}$ and $\alpha_{x-\order{\beta}} \leq j $}\\
\alpha_{x-\order{\beta}} + \order{\beta} 
& \text{if $x > i+\order{\beta}$ and $\alpha_{x-\order{\beta}} > j $}\\ 
\end{cases}
\end{align*}
As before, the balloon notation is not associative.
Unlike 2413-balloons, which have to be interpreted
as right-associative, 
generalized balloons can use brackets to define associativity.
Note that the 2413-balloon defined in Section~\ref{section-definitions-and-notation}
are written as $\ballgen{2,2}{2413}{\beta}$ in our
generalized notation.

We remark that for any $\alpha$ and any $\beta$, we have
$\ballgen{0,0}{\alpha}{\beta} = \alpha \oplus \beta$,
and we can easily determine $\mobp{\alpha \oplus \beta}$
using results from
Propositions~1~and~2 of
Burstein, 
Jel{\'{i}}nek, 
Jel{\'{i}}nkov{\'{a}} and 
Steingr{\'{i}}msson~\cite{Burstein2011}.

\subsection{Generalised 2413-balloons}

If we restrict $\alpha$ to 2413,
then, up to symmetry, there are seven 
possible values for the indexes:
$(0,0)$,
$(0,1)$,
$(0,2)$,
$(1,0)$,
$(1,1)$,
$(1,2)$, and
$(2,2)$.
Theorem~\ref{theorem-2413-balloons} 
handles the case where the indexes are $(2,2)$,
and~\cite{Burstein2011} 
handles the case where the indexes are $(0,0)$.
For the other indexes, we have
\begin{conjecture}
    Let $\pi = \ballgen{i,j}{2413}{\beta}$,
    where 
    $(i,j) \in 
    \{
    (0,1), (0,2), (1,1), (1,2)
    \}$.
    Then
    \[
    \mobp{\pi} =
    \begin{cases}
    0 & \text{If $(i,j) = (0,1)$ and $\beta = \tau \plusone$} \\
    0 & \text{If $(i,j) = (0,2)$ and $\beta = \tau \minusone$} \\
    0 & \text{If $(i,j) = (1,1)$ and $\beta = \oneminus \tau$ or 12} \\
    0 & \text{If $(i,j) = (1,2)$ and $\beta = \oneplus \tau$} \\
    \mobp{\beta} & \text{Otherwise.}
    \end{cases}
    \]
\end{conjecture}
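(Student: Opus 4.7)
The plan is to extend the partition-and-involution framework of Observation~\ref{observation-all-we-have-to-do} and Lemma~\ref{lemma-2413-balloon-beta-not-a-balloon} from the case $(i,j) = (2,2)$ to each of the four index pairs in turn. A convenient unifying observation is that the generalized balloons can be written as inflations
\[
\ballgen{0,1}{2413}{\beta} = 2413[\beta \plusone, 1, 1, 1], \qquad \ballgen{1,2}{2413}{\beta} = 2413[\oneplus \beta, 1, 1, 1],
\]
\[
\ballgen{0,2}{2413}{\beta} = 2413[\beta \minusone, 1, 1, 1], \qquad \ballgen{1,1}{2413}{\beta} = 2413[\oneminus \beta, 1, 1, 1],
\]
which follow by expanding the outer 5-permutation (either $23514$ or $32514$) and noting that each of those is itself $2413$ with position 1 inflated by $12$ or $21$. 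The four side conditions in the conjecture are precisely the hypotheses under which the first inflation block acquires a long corner in the sense of Lemma~\ref{lemma-oneplus-oneplus}; for instance, $\beta = \tau \plusone$ makes $\beta \plusone = \tau \plusone \plusone$.

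For each $(i,j)$ I would define $\redset$, $\chainr$, $\chaing$, $\chainb$ by direct analogy with Section~\ref{section-2413-balloons}, using the least $(i,j)$-balloon above the pivot in place of the least 2413-balloon. Lemma~\ref{lemma-2413-balloon-phi-gb} and Observation~\ref{observation-all-we-have-to-do} transfer with essentially cosmetic changes, so that $\mobp{\pi} = - \sum_{\sigma \in \redset} \mobp{\sigma}$. In the generic case I would enumerate the up to fifteen reductions (following the template of Figure~\ref{figure-2413-reductions}), apply Lemma~\ref{lemma-oneplus} to express each $\mobp{\sigma}$ as $\pm \mobp{\beta}$ or $0$, and verify that the resulting table sums to $-\mobp{\beta}$ regardless of how many corners $\beta$ has, yielding $\mobp{\pi} = \mobp{\beta}$.

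The main obstacle is the degenerate subcases. When $\beta$ satisfies the corresponding corner condition, the inflation picks up a long corner and many reductions inherit long corners themselves, so their $\mobp{\sigma}$ vanishes by Lemma~\ref{lemma-oneplus-oneplus}; what needs to be shown is that the remaining contributions cancel exactly, making the Hall sum zero instead of $-\mobp{\beta}$. For each of the four $(i,j)$ pairs this should reduce to a dedicated table, with a three-way split on the number of corners of the residual permutation $\tau$ in the conjecture's hypothesis. The genuinely hardest piece will be the isolated exception $(i,j) = (1,1),\ \beta = 12$: here $\oneminus \beta = 312$ has no long corner at all, yet $\mobp{\pi}$ should still vanish. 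For this sub-case, either an ad hoc enumeration of the reductions of $\ballgen{1,1}{2413}{12} = 423615$, or a small separate involution exploiting the $2143[1,12,1,1]$-inflation structure that this particular $\pi$ admits, should close the gap.
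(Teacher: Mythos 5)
First, a point of orientation: this statement is presented in the paper only as a conjecture, with no proof given, so there is no argument of the author's to compare yours against; your proposal has to stand on its own. Your preliminary observations do check out: the inflation identities $\ballgen{0,1}{2413}{\beta}=2413[\beta\plusone,1,1,1]$, $\ballgen{0,2}{2413}{\beta}=2413[\beta\minusone,1,1,1]$, $\ballgen{1,1}{2413}{\beta}=2413[\oneminus\beta,1,1,1]$ and $\ballgen{1,2}{2413}{\beta}=2413[\oneplus\beta,1,1,1]$ are correct, and the four zero cases are exactly those in which the inflated block acquires a long corner in the sense of Lemma~\ref{lemma-oneplus-oneplus}, with $(1,1)$, $\beta=12$ the genuine exception since $\oneminus 12=312$ has no long corner. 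But what follows is a plan, not a proof, and every load-bearing step is deferred. The claim that Lemma~\ref{lemma-2413-balloon-phi-gb} and Observation~\ref{observation-all-we-have-to-do} ``transfer with essentially cosmetic changes'' is precisely where the work lives: the definition of a proper reduction, the uniqueness of the overline representation, the classification of improper reductions by the corner structure of $\beta$, and above all the core algorithm producing $\eta_c$ are all tailored to the geometry of the $(2,2)$-balloon, in which $\beta$ sits strictly inside all four outer points. For the other index pairs the block abuts an edge or corner of $2413$, so the notion of ``smallest $(i,j)$-balloon containing the pivot'' and the involution $\Phi_{\chainb}$ must be rebuilt, and it is not at all automatic that $\Phi_{\chainb}(c)$ avoids the new $\chainr$; this is exactly the kind of interaction that forced the delicate case analysis in Lemma~\ref{lemma-2413-balloon-beta-not-a-balloon}.

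Even granting the reduction $\mobp{\pi}=-\sum_{\sigma\in\redset}\mobp{\sigma}$, you have not produced the tables that would finish the argument: you assert that the generic sum equals $-\mobp{\beta}$ and that the degenerate cases cancel to zero, but those computations, together with the determination of which of the fifteen reductions are improper for each corner count, are the substance of the proof, not a routine verification (in the $(2,2)$ case the sum comes out right only because the improper reductions removed from $\redset$ exactly compensate the entries that become zero). The isolated case $\ballgen{1,1}{2413}{12}=423615$ is left entirely open, and your parenthetical structural claim there is wrong: $423615$ is not an inflation of $2143$ (one has $423615=2413[312,1,1,1]=32514[1,12,1,1,1]$, whereas $2143[1,12,1,1]$ has length five), so the proposed ``small separate involution'' has no stated starting point. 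In short, you have correctly identified the strategy and the obstacles, but none of the obstacles is overcome, and the statement remains, as in the paper, a conjecture.
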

and
\begin{conjecture}
    \label{conjecture-1-0-2413-balloons}
    Let $\pi = \ballgen{1,0}{2413}{\beta}$.
    Then
    \[
    \mobp{\pi} =
    \begin{cases}
    6 & \text{If $\beta = 1$} \\
    -2 & \text{If $\beta = 21$} \\
    0 & \text{If $\beta = 312$} \\
    2 \mobp{\beta} & \text{If $\beta = \ballgen{1,0}{2413}{\gamma}$} \\
    \mobp{\beta} & \text{Otherwise.}
    \end{cases}
    \]
\end{conjecture}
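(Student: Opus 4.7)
The plan is to follow the framework established in Sections~\ref{section-define-2413-balloon}--\ref{section-2413-balloons}, adapted to the $(1,0)$-balloon operator. First I would dispose of the base cases where $\order{\beta}$ is small --- that is, $\beta \in \{1, 21, 312\}$, which give the exceptional values, together with the remaining $\beta$ of length at most $4$, which I would handle by direct computation. These can all be verified by enumerating the intervals $[1, \pi]$ and applying Philip Hall's theorem together with Lemmas~\ref{lemma-oneplus-oneplus}, \ref{lemma-oneplus}, and Corollary~\ref{corollary-monotonic-interval}.

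For the inductive case with $\order{\beta} \geq 5$, I would introduce $(1,0)$-analogues of the key objects from the earlier sections: the notions of reduction and proper reduction of $\pi = \ballgen{1,0}{2413}{\beta}$; for a chain $c$ the pivot $\psi_c$, the least $(1,0)$-balloon $\phi_c = \ballgen{1,0}{2413}{\tau_c}$, the second-highest element $\kappa_c$, and the core $\eta_c$ of $\psi_c$ (the smallest $\gamma$ with $\psi_c \leq \ballgen{1,0}{2413}{\gamma}$); and the partition $\chainc = \chainr \cup \chaing \cup \chainb$. The minimal $(1,0)$-balloon $\ballgen{1,0}{2413}{1} = 31524$ plays the role previously played by $2413$ in the definition of $\chaing$. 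The involutions $\Phi_{\chaing}$ and $\Phi_{\chainb}$ toggle the presence of $31524$ and of $\ballgen{1,0}{2413}{\eta_c}$ respectively, and a core-algorithm analogous to the one in Section~\ref{section-define-2413-balloon} must be derived from scratch by enumerating the local patterns at the pivot that allow a smaller $(1,0)$-balloon to contain $\psi_c$. Once an analogue of Observation~\ref{observation-all-we-have-to-do} is established, $\mobp{\pi}$ reduces to $-\sum_{\sigma \in \redset} \mobp{\sigma}$, which I would evaluate by splitting into cases based on the corners of $\beta$ and on whether $\beta$ is itself a $(1,0)$-balloon.

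The main obstacle will be the core-algorithm and the precise bookkeeping in the $\mobp{\sigma}$-table for the proper reductions. In the $(1,0)$-balloon the inserted $\beta$-block sits immediately to the right of the leftmost red point and immediately to the left of the three right-hand red points (which form a $312$ pattern among the top three values of $\pi$), so the pivot can form unexpected adjacencies with the top and bottom of $\beta$ that were not possible in the $(2,2)$-case. This produces several new small-$\eta_c$ patterns, and consequently new improper reductions when $\beta$ has corners. Verifying that $\Phi_{\chainb}$ maps $\chainb$ into itself in every corner configuration, and that the $\mobp{\sigma}$-table for $\sigma \in \redset$ sums to $-\mobp{\beta}$ generically and to $-2\mobp{\beta}$ when $\beta$ is itself a $(1,0)$-balloon, is where the bulk of the careful casework lies. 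The doubling in the latter case should emerge structurally in the same manner as in Theorem~\ref{theorem-2413-balloon-beta-is-a-balloon}, from one additional proper reduction (namely $\beta$ itself) contributing $-\mobp{\beta}$ to the sum on top of the generic $-\mobp{\beta}$.
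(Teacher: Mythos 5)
The statement you have chosen is labelled as a conjecture in the paper, not a theorem: the paper offers no proof of it, so there is no ``paper's own proof'' to compare against. Your proposal should therefore stand alone, and at present it is a plan rather than a proof. You identify the right machinery to carry over --- reductions, the pivot $\psi_c$, the least balloon $\phi_c$, the core $\eta_c$, the partition into $\chainr$, $\chaing$, $\chainb$, and an analogue of Observation~\ref{observation-all-we-have-to-do} --- but every step that actually requires work is flagged as ``where the bulk of the careful casework lies'' rather than carried out: the $(1,0)$ core-algorithm is not derived, the improper reductions are not enumerated for each corner configuration, $\Phi_{\chainb}(\chainb)\subseteq\chainb$ is not verified, and no $\mobp{\sigma}$-table is produced. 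Until those are done, the conjecture remains a conjecture.

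Beyond incompleteness, there is a substantive misreading of how the doubling arises, and carrying that misreading into the casework will lead you astray. You write that when $\beta=\ballgen{1,0}{2413}{\gamma}$ the factor of $2$ should come from ``one additional proper reduction (namely $\beta$ itself)''; in the proved $(2,2)$ case the mechanism is exactly the reverse. When $\beta$ is a $2413$-balloon, $\beta$ is the unique \emph{improper} reduction and is \emph{excluded} from $\redset$; the fourteen remaining reductions already give $\sum_{\sigma\in\redset}\mobp{\sigma}=-2\mobp{\beta}$ (the table in Theorem~\ref{theorem-2413-balloon-beta-is-a-balloon}). When $\beta$ is not a balloon and has no corners, $\beta$ \emph{is} a proper reduction, its term $+\mobp{\beta}$ raises the sum to $-\mobp{\beta}$, and one gets $\mobp{\pi}=\mobp{\beta}$ (the first table in Lemma~\ref{lemma-2413-balloon-beta-not-a-balloon}). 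So the doubling comes from $\beta$ \emph{leaving} $\redset$, not joining it. Relatedly, the permutation that $\Phi_{\chaing}$ toggles is $\ballgen{1,0}{2413}{\epsilon}=2413$, exactly as in the $(2,2)$ case; $31524=\ballgen{1,0}{2413}{1}$ is the analogue of $25314=\ball{2413}{1}$, i.e.\ the smallest permutation $\Phi_{\chainb}$ can insert, and conflating the two will break the parity-reversal arguments.
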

We remark here that
Theorem~\ref{theorem-2413-balloons}
and
Conjecture~\ref{conjecture-1-0-2413-balloons}
have a very similar structure.  
It is not clear to us whether this similarity
is coincidental, or whether there is 
some deeper reason.

\subsection{Bounding the \mob function on hereditary classes}

Corollary 24 in 
Burstein, Jel{\'{i}}nek, Jel{\'{i}}nkov{\'{a}} 
and Steingr{\'{i}}msson~\cite{Burstein2011}
gives us that if $\pi$ is separable,
then $\mobp{\pi} \in \{ 0, \pm1 \}$.  
The simple permutations in the 
hereditary class of separable permutations
are 
$1$,
$12$, and
$21$.
In Remark~\ref{remark-simples-in-2413-balloons}
we have unbounded growth where the 
simple permutations in the hereditary class are just
$1$, 
$12$, 
$21$,
$2413$, and
$25314$,
so adding $2413$ and $25314$ to the simple permutations
moves us from bounded growth
to unbounded growth.
This then leads to:
\begin{question}
    If $C$ is a hereditary class containing just the simples
    $1$, 
    $12$, 
    $21$ and
    $2413$,
    and $\pi \in C$, then
    is $\mobp{\pi}$ bounded?
    Further, if $D$ is a hereditary class containing just the simples
    $1$, 
    $12$, 
    $21$,
    $2413$, and
    $3142$,
    and $\pi \in D$, then
    is $\mobp{\pi}$ bounded?    
\end{question}

\paragraph*{Acknowledgements.}

I would like to thank my supervisor, Robert Brignall, 
for his patience and help while I was writing this paper,
and 
Einar Steingr{\'{i}}msson 
and 
Jan Kyn{\v{c}}l
for their comments.
I would also like to thank Jan 
for spotting an error in an earlier version
of what is now Lemma~\ref{lemma-2413-balloon-phi-gb},
and an anonymous reviewer
for identifying an error in Lemma~\ref{lemma-2413-balloon-beta-not-a-balloon},
and suggesting a solution.



\begin{thebibliography}{99}
    
    \bibitem{Albert2005}
    M.~H. Albert and M.~D. Atkinson.
    \newblock {Simple permutations and pattern restricted permutations}.
    \newblock {\em Discrete Mathematics}, 300(1):1--15, 2005.
    
    \bibitem{Brignall2020}
    R.~Brignall, V.~Jel{\'{i}}nek, J.~Kyn{\v{c}}l, and D.~Marchant.
    \newblock {Zeros of the M{\"{o}}bius function of permutations}.
    \newblock {\em Mathematika}, 65(4):1074--1092, 2019.
    
    \bibitem{Brignall2017a}
    R.~Brignall and D.~Marchant.
    \newblock {The M{\"{o}}bius function of permutations with an indecomposable
        lower bound}.
    \newblock {\em Discrete Mathematics}, 341(5):1380--1391, 2018.
    
    \bibitem{Burstein2011}
    A.~Burstein, V.~Jel{\'{i}}nek, E.~Jel{\'{i}}nkov{\'{a}}, and
    E.~Steingr{\'{i}}msson.
    \newblock {The M{\"{o}}bius function of separable and decomposable
        permutations}.
    \newblock {\em Journal of Combinatorial Theory, Series A}, 118(8):2346--2364,
    2011.
    
    \bibitem{Jelinek2020}
    V.~Jel{\'{i}}nek, I.~Kantor, J.~Kyn{\v{c}}l, and M.~Tancer.
    \newblock {On the growth of the M{\"{o}}bius function of permutations}.
    \newblock {\em Journal of Combinatorial Theory, Series A}, 169:105--121, 2020.
    
    \bibitem{Smith2013}
    J.~P. Smith.
    \newblock {On the M{\"{o}}bius function of permutations with one descent}.
    \newblock {\em Electronic Journal of Combinatorics}, 21(2):Paper 2.11, 19pp.,
    2014.
    
    \bibitem{Stanley2012}
    R.~P. Stanley.
    \newblock {\em {Enumerative Combinatorics, Volume 1}}.
    \newblock Cambridge University Press, New York, 2012.
    
    \bibitem{Wilf2002}
    H.~S. Wilf.
    \newblock {The patterns of permutations}.
    \newblock {\em Discrete Mathematics}, 257(2):575--583, 2002.
    
\end{thebibliography}
\end{document}